\theoremstyle{plain}
\newtheorem{teo}{Theorem}[section]
\newtheorem{cor}[teo]{Corollary}
\newtheorem{lem}[teo]{Lemma}
\newtheorem{prop}[teo]{Proposition}
\theoremstyle{definition}
\newtheorem{defn}[teo]{Definition}
\newtheorem{exa}[teo]{Example}
\newtheorem{rem}[teo]{Remark}
\numberwithin{equation}{section}
\def\bbP{{\mathbb P}}
\def\bbN{{\mathbb N}}
\def\bbT{{\mathbb T}}
\def\proc{\text{C}(\mathbb{T}^d;\mathcal{N},\mathcal{L})}
\def\procR{\text{C}(\mathbb{T}^d_+;\mathcal{N},\mathcal{L})}
\def\procp{\text{C}(\mathbb{T}^d; \mathcal{P}(\lambda), \text{Bin}(p))} 
\def\procpR{\text{C}(\mathbb{T}^d_+; \mathcal{P}(\lambda), \text{Bin}(p))}
\def\up{{U(\bbT^d; \mathcal{N},\mathcal{L})}}
\def\upP{{U(\bbT^d_+; \mathcal{N},\mathcal{L})}}
\def\upPd{{U(\bbT^{d+1}_+; \mathcal{N},\mathcal{L})}}
\def\procUp{\text{U}(\mathbb{T}^d_+; \mathcal{P}(\lambda), \text{Bin}(p))}
\def\procIp{\text{I}(\mathbb{T}^d_+; \mathcal{P}(\lambda), \text{Bin}(p))}
\def\upi{{I(\bbT^d; \mathcal{N},\mathcal{L})}}
\def\upPi{{I(\bbT^d_+; \mathcal{N},\mathcal{L})}}
\def\low{{L(\bbT^d; \mathcal{N},\mathcal{L})}}
\def\lowP{{L(\bbT^d_+; \mathcal{N},\mathcal{L})}}
\begin{document}

\baselineskip=26pt

\address[Valdivino~V.~Junior]
{Federal University of Goias
\\ Campus Samambaia, CEP 74001-970, Goi\^ania, GO, Brazil.}

\address[F\'abio~P.~Machado]
{Institute of Mathematics and Statistics
\\ University of S\~ao Paulo \\ Rua do Mat\~ao 1010, CEP
05508-090, S\~ao Paulo, SP, Brazil.}

\address[Alejandro Rold\'an-Correa]
{Instituto de Matem\'aticas, Universidad de Antioquia, Calle 67, no 53-108, Medellin, Colombia.}

\title[Uniform dispersion in growth models on homogeneous trees]{Uniform dispersion in growth models on homogeneous trees}
\author{Valdivino~V.~Junior}
\author{F\'abio~P.~Machado}
\author{Alejandro Rold\'an-Correa}

\noindent
\email{fmachado@ime.usp.br, alejandro.roldan@udea.edu.co, vvjunior@ufg.br}

\thanks{Research supported by FAPESP (2023/13453-5 and 2022/08948-2) and Universidad de Antioquia.}

\keywords{Branching processes, Catastrophes, Population dynamics.}

\subjclass[2010]{60J80, 60J85, 92D25}

\date{\today}

\begin{abstract}
We consider the dynamics of a population spatially structured in colonies that are vulnerable to catastrophic events occurring at random times, which randomly reduce their population size and compel survivors to disperse to neighboring areas. The dispersion behavior of survivors is critically significant for the survival of the entire species. In this paper, we consider an uniform dispersion scheme, where all possible survivor groupings are equally probable. The aim of the survivors is to establish new colonies, with individuals who settle in empty sites potentially initiating a new colony by themselves. However, all other individuals succumb to the catastrophe. We consider the number of dispersal options for surviving individuals in the aftermath of a catastrophe to be a fixed value $d$ within the neighborhood. In this context, we conceptualize the evolution of population dynamics occurring over a homogeneous tree. We investigate the conditions necessary for these populations to survive, presenting pertinent bounds for survival probability, the number of colonized vertices, the extent of dispersion within the population, and the mean time to extinction for the entire population.
\end{abstract}

\singlespacing

\maketitle

\section{Introduction}
\label{S: Introduction}
Some biological populations frequently face catastrophic events, such as epidemics and natural disasters, which can lead to the extinction of the species. Dispersal of individuals during catastrophes serves as a strategy to enhance a species' chances of survival. This strategy increases genetic diversity within separated populations, reduces intraspecific competition for resources, and helps individuals avoid predation or infections. Dispersal also allows for the colonization of new habitats less affected by the catastrophe, ultimately increasing the chance of long-term survival for at least a portion of the population. For further information on dispersal in the biological context, please refer to Ronce \cite{R2007}.\\

In \cite{JMR2016, JMR2020, JMR2022, MRV2018, MRS2015, S2014}, models have been introduced to examine various dispersal strategies in populations experiencing different types of catastrophes. These models aim to assess the influence of these strategies on population viability, contrasting them with scenarios where no dispersal occurs. Their analysis seeks to determine the best strategy (dispersion or no dispersion) by evaluating survival probability and population extinction times when a specific strategy is employed. In particular, in \cite{JMR2020, JMR2022, MRV2018, MRS2015}, the authors consider that when a colony of individuals suffers a catastrophe, the size of this population is reduced according to some probability law (binomial or geometric), and the surviving individuals disperse to neighboring sites to establish new colonies. New colonies can only form on empty sites.
Among the individuals that go to the same empty site, only one survives and the others die. Three dispersion schemes were considered, when each colony has $d$ neighboring sites:

\begin{itemize}
    \item  \textit{Optimal Dispersion:} When $r$ individuals survive a catastrophe, there are exactly $\min\{r,d\}$ successful attempts to establish new colonies.\\
    \item \textit{Independent Dispersion:} Each survivor picks a neighboring site at random and tries to create a new colony there.\\ 
    \item \textit{Uniform Dispersion:} For every $r$ survivors, consider all sets of numbers $r_1,\dots,r_{d} \in\bbN$ (\textit{occupancy set of numbers}) that are a solution to $r_1 + r_2 + \cdots + r_{d} = r.$ For each of these sets exactly one of the $r_i$ individuals (if $r_i>0$, and 0 otherwise) will succeed in colonizing the neighboring vertex i, $i=1, \dots, d$. Observe that $r_i$ may be 0. Here we consider that all possible survivor groupings are equally probable. So, the probability of having exactly $y \le \min\{r,d\}$ successful attempts when the number of survivors is $r$, is 
    \[ \frac{{{r-1}\choose{y-1}}}{{{d+r-1}\choose{r}}} {{d}\choose{y}}. \]
\end{itemize}
Taking everything into account, the population growth of each colony follows a birth and death process denoted as $\mathcal{N}$.
Each colony is associated with an independent exponential random time, with a mean of 1, indicating the occurrence time of a catastrophe. When a catastrophe occurs, it results in a reduction in the colony's size, following a specific probability distribution, $\mathcal{L}$. Although the paper is presented in a general context, several results detail specific cases. In particular, we examine the scenario where 
\begin{itemize}
    \item $\mathcal{N}$ follows a Poisson process with rate $\lambda$, denoted by $\mathcal{P}(\lambda)$, and
    \item $\mathcal{L}$ follows a binomial distribution whose parameters are the colony's size and $p$, denoted by $\text{Bin}(p).$
\end{itemize}

Next, consider the number of dispersal options for surviving individuals in the aftermath of a catastrophe to be a fixed value $d$ within the neighborhood. In this context, we conceptualize the evolution of population dynamics occurring over $\mathbb{T}^d$, a homogeneous tree where every
vertex has $d+1$ nearest neighbors, and over $\mathbb{T}^d_+$, a tree whose only difference from $\mathbb{T}^d$ is that
its origin has degree of $d$.

Individuals remaining after the catastrophe are  distributed among the nearest neighbor vertices, according to \textit{uniform dispersion}. Individuals that go to a vertex already occupied by a colony die. Among the individuals that go to the same (empty) vertex to create a new colony there, only one succeeds, the others die. Therefore, when a catastrophe occurs in a colony, that colony is replaced by $0, 1,\ldots$ or $d$ colonies, each colony is started by a single individual. We assume that initially all vertices of $\mathbb{G}$ are empty, except one called the \textit{origin}, which has a colony with a single individual. We denote this model by $\text{C}(\mathbb{G};\mathcal{N},\mathcal{L})$ where $\mathbb{G}$ is either $\mathbb{T}^d$ or $\mathbb{T}^d_+$.

This paper is divided into four sections. In Section 2, we introduce results on phase transition, survival probability, the number of colonies created in the model $\proc$. Besides that, we study the reach on the graph and the mean extinction time of the model $\procR$. In Section 3, we prove the results presented in Sections 2.
Finally, in Section 4, we contrast \textit{uniform dispersion} with \textit{independent dispersion}, by comparing our model with the model introduced by Machado et al.~\cite{MRV2018}.

\section{Main Results}

In order to establish the main results on  $\text{C}(\mathbb{T}^d;\mathcal{N},\mathcal{L})$
and $\text{C}(\mathbb{T}^d_+;\mathcal{N},\mathcal{L})$, we begin by introducing some definitions and notations.

Let $N_t$ be the number of individuals in a colony at time $t$ (after its creation) and $T$ be the time in which the catastrophe occurs in that colony. We denote by $N$ the number of individuals that survive the catastrophe before dispersing. In particular, for \linebreak $\text{C}(\mathbb{G}; \mathcal{P}(\lambda), \text{Bin}(p))$, the distribution of $N$ is given by
\begin{equation}\label{distparticular}
\mathbb{P}[N=n] =\left\{
\begin{array}{ll}
\dfrac{1-p}{\lambda p +1},&  n=0,\\ \\
\left (\dfrac{\lambda p}{\lambda p + 1} \right )^n \dfrac{\lambda +1}{\lambda (\lambda p +1)},& \, n \geq 1,
\end{array}
\right.
\end{equation}
and the probability generating  function is
\begin{equation}\label{fgpparticular}
\mathbb{E}[s^{N}]
= \frac{1-p(1-s)}{1+\lambda p(1-s)}.
\end{equation}

For more details, see Junior~\textit{et al}~\cite[Proof of Lemma 4.3]{JMR2016}.

Observe that  model $\text{C}(\mathbb{T}^d;\mathcal{N},\mathcal{L})$ is a continuous-time stochastic process with state space ${\mathbb{N}_0}^{\mathcal{V}(\mathbb{T}^d)}$, where $\mathcal{V}(\mathbb{T}^d)$ is the set of vertices of $\mathbb{T}^d$. The evolution of this process in time is denoted by $\eta_t$. For a vertex $x\in\mathcal{V}(\mathbb{T}^d)$, $\eta_t(x)$ is the number of individuals at time $t$ at vertex $x$.  We consider $|\eta_t|:=\sum_{x\in \mathcal{V}(\mathbb{T}^d)} \eta_{t}(x)$, the total number of individuals present in $\mathbb{T}^d$ at time $t$. Analogously for $\text{C}(\mathbb{T}^d_+;\mathcal{N},\mathcal{L})$.

\subsection{Phase Transition}

\begin{defn}
We say that the process ($\text{C}(\mathbb{T}^d;\mathcal{N},\mathcal{L})$ or $\text{C}(\mathbb{T}^d_+;\mathcal{N},\mathcal{L})$) \textit{survives}  if at every instant of time there is at least one alive colony.  
For $V_d$, the survival event, 
\[V_d = \{|\eta_t|>0, \text{ for all } t\geq0\}.\]
\end{defn}    
In the following result, we establish sufficient conditions for the probability of survival to be greater than or equal to zero.

\begin{teo}
\label{C: MCC2H}
Consider $\proc$ and let $V_d$ be the survival event. Then $\mathbb{P}(V_d) = 0 $ if
\begin{displaymath}
\mathbb{E} \left [ \frac{N}{N+d} \right] \leq \frac{1}{d+1}
\end{displaymath}
and
$\mathbb{P}(V_d) > 0 $ if
\begin{displaymath}
\mathbb{E} \left [ \frac{N}{N+d} \right]  > \frac{1}{d}.
\end{displaymath}
\end{teo}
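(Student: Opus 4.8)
The plan is to recognize that, because $\mathbb{T}^d$ is a tree, the collection of living colonies evolves as a Galton--Watson branching process, and to reduce the claimed dichotomy to the supercritical/subcritical dichotomy for that process. Root the tree at the origin and orient every edge away from it. When a colony at a vertex $v$ undergoes its catastrophe it is replaced by new colonies sitting on (some of) the $d$ children of $v$; since distinct vertices of a tree have disjoint sets of descendants, colonies created at different vertices can never compete for a common empty site, so no collisions beyond those already built into the uniform dispersion rule ever occur. Consequently the number of living colonies is exactly a Galton--Watson process $(Z_n)_{n\ge0}$ whose offspring variable $Y$ is the number of children of $v$ that receive at least one survivor. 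The only asymmetry between $\mathbb{T}^d$ and $\mathbb{T}^d_+$ is the out-degree of the origin; since the origin produces an a.s.\ finite number of offspring subtrees, each an independent copy of the process, this changes neither the positivity nor the vanishing of $\mathbb{P}(V_d)$, so it may be ignored for the phase transition. I would then invoke the standard fact that $\mathbb{P}(\text{non-extinction})>0$ if and only if the offspring mean $\mu:=\mathbb{E}[Y]$ exceeds $1$, provided $Y$ is non-degenerate; here $\mathbb{P}(Y=0)=\mathbb{P}(N=0)>0$ (e.g.\ by \eqref{distparticular} in the Poisson--binomial case), which rules out the degenerate critical case and makes $\mu\le1$ force almost-sure extinction.

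The key computation is the offspring mean. Conditioning on $N=r$ survivors, the configuration $(r_1,\dots,r_d)$ is uniform over the $\binom{d+r-1}{r}$ nonnegative solutions of $r_1+\dots+r_d=r$, and $Y=\#\{i:r_i\ge1\}$. By linearity of expectation, $\mathbb{E}[Y\mid N=r]=d\,\mathbb{P}(r_1\ge1)=d\bigl(1-\mathbb{P}(r_1=0)\bigr)$, and a one-line count of the solutions with $r_1=0$ gives $\mathbb{P}(r_1=0)=\binom{d+r-2}{r}/\binom{d+r-1}{r}=(d-1)/(d+r-1)$. Hence $\mathbb{E}[Y\mid N=r]=dr/(d+r-1)$ and
\[
\mu=\mathbb{E}\!\left[\frac{dN}{N+d-1}\right].
\]

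It remains to sandwich $\mu$ by $\mathbb{E}[N/(N+d)]$. I would verify the pointwise bounds, valid for every integer $N\ge0$,
\[
d\,\frac{N}{N+d}\;\le\;\frac{dN}{N+d-1}\;\le\;(d+1)\,\frac{N}{N+d},
\]
where the left inequality is immediate from $N+d-1\le N+d$ and the right one reduces, after clearing denominators, to $N\ge1$ (both sides vanish at $N=0$). Taking expectations yields $d\,\mathbb{E}[N/(N+d)]\le\mu\le(d+1)\,\mathbb{E}[N/(N+d)]$. Therefore $\mathbb{E}[N/(N+d)]>1/d$ forces $\mu>1$ and hence $\mathbb{P}(V_d)>0$, while $\mathbb{E}[N/(N+d)]\le1/(d+1)$ forces $\mu\le1$ and hence $\mathbb{P}(V_d)=0$, which are exactly the two claimed implications.

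The main obstacle I anticipate is not the arithmetic but the careful justification of the first paragraph: one must argue that the living colonies genuinely form a Galton--Watson process---that the tree geometry prevents two surviving individuals from colonizing a common empty site, and that the numbers of colonies produced by different colonies are i.i.d.\ and independent of the past---and that the exceptional out-degree at the origin is irrelevant to whether the survival probability is zero or positive. Once this reduction is in place, the conditional-mean computation and the elementary sandwich inequalities finish the proof.
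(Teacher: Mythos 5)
There is a genuine gap, and it is in your first paragraph: the reduction to a single Galton--Watson process mis-models the dispersal. In $\proc$ the survivors of a catastrophe at $v$ are \emph{not} distributed over the $d$ children only; they are spread uniformly over compositions of $r$ into $d+1$ parts, one for each of the $d+1$ neighbors of $v$, including the neighbor lying toward the origin. An individual sent in that backward direction dies if the parent vertex is still occupied, but successfully founds a colony if the parent site has been vacated by an earlier catastrophe. Hence the number of new colonies produced by a collapsing colony depends on the current occupancy of a neighboring site, i.e.\ on the configuration and the past: the living colonies do \emph{not} form a Galton--Watson process, and your offspring mean $\mathbb{E}\bigl[dN/(N+d-1)\bigr]$ (computed from compositions into $d$ parts) is not the offspring mean of any process equal to $\proc$. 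A structural red flag you could have caught: if your reduction were valid, survival would hold if and only if $\mathbb{E}\bigl[dN/(N+d-1)\bigr]>1$, a sharp threshold, whereas the theorem deliberately leaves a gap between the conditions $\mathbb{E}[N/(N+d)]\leq 1/(d+1)$ and $\mathbb{E}[N/(N+d)]>1/d$ --- your argument would prove strictly more than the paper claims, which is exactly the symptom of an invalid simplification.

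The paper circumvents the backward-recolonization problem by a two-sided coupling with processes that genuinely are branching processes: from below by the ``Move Forward or Die'' process $\lowP$, in which individuals dispersing backward \emph{always} die (offspring mean $d\,\mathbb{E}[N/(N+d)]$, since $\mathbb{P}(\mathcal{I}_1=1\mid N=n)=n/(n+d)$ for compositions into $d+1$ parts), and from above by the self-avoiding process $\upPd$ on $\bbT^{d+1}_+$, in which the backward group \emph{always} succeeds on a fresh vertex (offspring mean $(d+1)\,\mathbb{E}[N/(N+d)]$). Supercriticality of the lower process gives $\mathbb{P}(V_d)>0$ when $\mathbb{E}[N/(N+d)]>1/d$, and subcriticality of the upper one gives $\mathbb{P}(V_d)=0$ when $\mathbb{E}[N/(N+d)]\leq 1/(d+1)$; the gap between the two conditions is precisely the price of not knowing the fate of backward dispersers. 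Your sandwich inequalities
\[
d\,\frac{N}{N+d}\;\leq\;\frac{dN}{N+d-1}\;\leq\;(d+1)\,\frac{N}{N+d}
\]
are arithmetically correct, but they bound the wrong quantity, so the coincidence of your final conditions with the theorem's does not repair the argument. A secondary, smaller issue: your dismissal of the degenerate critical case rests on $\mathbb{P}(N=0)>0$, which you justify only in the Poisson--binomial special case, while the theorem is stated for general $\mathcal{N},\mathcal{L}$ (indeed, for $N\equiv 1$ one has $\mathbb{E}[N/(N+d)]=1/(d+1)$ while a single colony performs a random walk forever, so the boundary case does require the non-degeneracy caveat).
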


\begin{cor}
\label{C: MCC2HP}
Consider $\procp$ and let $V_d$ be the survival event. Then  $\mathbb{P}(V_d) = 0 $ if
\[
\Phi \left( \frac{\lambda p }{\lambda p + 1}, 1,d+1\right ) \geq \frac{(\lambda p +1)[p(\lambda d + d + 1)-1]}{pd(d+1)(\lambda +1)}
\]
and
$\mathbb{P}(V_d) > 0 $ if
\[
\Phi \left( \frac{\lambda p }{\lambda p + 1}, 1,d+1\right )  < \frac{(\lambda p +1)[p(\lambda d + d -\lambda)-1]}{pd^2(\lambda +1)},
\]
where $\Phi(z,s,a)$ is the Lerch Transcendent Function given by 
\begin{align}
\Phi(z,s,a) = \sum_{j=0}^{\infty} \frac{z^j}{(a+j)^s}, \mid z \mid <1.
\end{align}
\end{cor}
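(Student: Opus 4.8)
The plan is to derive the Corollary directly from Theorem~\ref{C: MCC2H} by evaluating $\mathbb{E}\left[\frac{N}{N+d}\right]$ for the specific law of $N$ recorded in~\eqref{distparticular}. Since the two thresholds of Theorem~\ref{C: MCC2H} are phrased entirely in terms of this single expectation, once I have a closed form for it in terms of $\lambda$, $p$, $d$ and the Lerch transcendent, both inequalities in the Corollary should fall out by elementary rearrangement.

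First I would write out the expectation. The atom at $n=0$ contributes nothing, since $\frac{N}{N+d}=0$ there, so
\begin{equation*}
\mathbb{E}\left[\frac{N}{N+d}\right] = \frac{\lambda+1}{\lambda(\lambda p+1)}\sum_{n=1}^{\infty}\frac{n}{n+d}\,z^{n}, \qquad z:=\frac{\lambda p}{\lambda p+1}.
\end{equation*}
To evaluate the series I would split $\frac{n}{n+d}=1-\frac{d}{n+d}$. The term $\sum_{n\ge1}z^n=\frac{z}{1-z}=\lambda p$ (using $1-z=\frac{1}{\lambda p+1}$), while for the remaining piece an index shift $n=j+1$ gives $\sum_{n\ge1}\frac{z^n}{n+d}=z\sum_{j\ge0}\frac{z^j}{(d+1)+j}=z\,\Phi(z,1,d+1)$. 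This is the one genuinely structural step: recognizing the tail series as the Lerch transcendent at $s=1$, $a=d+1$, which is exactly why that function appears in the statement (note $z<1$, so convergence is automatic).

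Combining these and simplifying the prefactor yields the closed form
\begin{equation*}
\mathbb{E}\left[\frac{N}{N+d}\right]=\frac{(\lambda+1)p}{\lambda p+1}-\frac{(\lambda+1)dp}{(\lambda p+1)^2}\,\Phi\!\left(z,1,d+1\right).
\end{equation*}
Finally I would substitute this into the two thresholds of Theorem~\ref{C: MCC2H}. Imposing $\mathbb{E}\left[\frac{N}{N+d}\right]\le\frac{1}{d+1}$ and solving for $\Phi$ (dividing by the negative coefficient of $\Phi$, which reverses the inequality) gives the extinction condition; imposing $\mathbb{E}\left[\frac{N}{N+d}\right]>\frac{1}{d}$ likewise gives the survival condition. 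In each case the bracketed numerator simplifies, e.g. $p(d+1)(\lambda+1)-(\lambda p+1)=p(\lambda d+d+1)-1$ for the first and $pd(\lambda+1)-(\lambda p+1)=p(\lambda d+d-\lambda)-1$ for the second, reproducing exactly the announced bounds. I expect the only real care needed to be the bookkeeping of the common denominators $pd(d+1)(\lambda+1)$ and $pd^2(\lambda+1)$ when matching my expressions to the stated form; there is no conceptual obstacle beyond the series identification above.
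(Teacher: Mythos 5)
Your proposal is correct and matches the paper's own route: the paper proves the corollary in one line by combining Theorem~\ref{C: MCC2H} with the distribution~(\ref{distparticular}), and the closed form you derive for $\mathbb{E}\left[\frac{N}{N+d}\right]$ via the index shift $\sum_{n\ge1}\frac{z^n}{n+d}=z\,\Phi(z,1,d+1)$ is exactly the computation underlying Lemma~\ref{lemparticular} (your expression equals $\alpha/d$ there). Your algebraic reductions of both thresholds, including the inequality reversal when isolating $\Phi$, check out against the stated bounds.
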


In particular, the Lerch Transcendent Function satisfies 
\begin{align}
\Phi(z,1,a) =  \frac{1}{z^a} \left [ \ln \left (\frac{1}{1-z} \right) - \sum_{j=1}^{a-1}\frac{z^j}{j} \right ].
\end{align}
for every natural number $a$. 

\begin{exa} 
\label{E: Id}
Consider $\procp$ and let $p_c(d, \lambda)$ defined by
\[
p_c(d, \lambda) = \inf \{ p \geq 0; \mathbb{P}(V_d) > 0  \}.
\]

Then, for $\lambda=10$ and $d=30$, by using Corollary \ref{C: MCC2HP},
we have that 
\[ 0.0962 \leq p_c(30, 10) \leq 0.0996. \]

With independent dispersion scheme, using \cite[Theorem 3.2]{MRV2018} for the analogous parameter, we have
\[ 0.0936 \leq p_c(30, 10) \leq 0.0969.\] Due to the intersection of the ranges obtained from the rigorous results currently available, it is not possible to make a definitive comparison between the models with uniform dispersion and independent dispersion.
\end{exa}

\subsection{Probability of Survival}

\begin{teo}
\label{T: MCCHPE2}
Consider  $\proc$ and let $V_d$ be the survival event.  Then
\begin{displaymath}
\sum_{r=1}^{d+1} \left [(1 - \rho^r)\binom{d+1}{r}\sum_{n=r}^{\infty}\frac{\binom{n-1}{r-1}}{\binom{n+d}{d}}\mathbb{P}(N=n) \right] \leq \mathbb{P}(V_d) \leq 1 - \psi
\end{displaymath}
where $\psi$ and $\rho$ are, respectively, the smallest non-negative solutions of 
\begin{displaymath}
\sum_{y=1}^{d+1} \left[ s^y\binom{d+1}{y}\sum_{n=y}^{\infty}\frac{\binom{n-1}{y-1}}{\binom{n+d}{d}}\mathbb{P}(N=n) \right] = s - \mathbb{P}(N=0) \textrm { and }
\end{displaymath}
\begin{displaymath}
\sum_{y=0}^{d} \left [ s^y\binom{d}{y}\sum_{n=y}^{\infty}\frac{\binom{n}{y}}{\binom{n+d}{d}}\mathbb{P}(N=n)\right ] = s.
\end{displaymath}
\end{teo}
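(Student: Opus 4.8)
The plan is to reduce survival of the colony process to the survival (infiniteness) of an embedded Galton--Watson genealogical tree, and then to sandwich that tree between two explicit branching processes. Since a colony lives from its creation until its catastrophe, at which instant it is simultaneously replaced by its newly founded colonies, the event $V_d$ holds exactly when the genealogical tree of colonies contains an infinite path, so the continuous-time question becomes a discrete branching-process question. The offspring law of a single colony is read off from uniform dispersion: if a colony has $k$ empty neighbouring sites and $N=n$ survivors, the $\binom{n+k-1}{n}$ occupancy configurations are equiprobable, and the number founding exactly $y$ colonies is $\binom{k}{y}\binom{n-1}{y-1}$ (choose the $y$ occupied sites and split $n$ into $y$ positive parts), so this probability is $\binom{k}{y}\binom{n-1}{y-1}/\binom{n+k-1}{n}$. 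Taking $k=d+1$ (all neighbours empty) and averaging over $N$ produces the numbers $a_y:=\binom{d+1}{y}\sum_{n\ge y}\frac{\binom{n-1}{y-1}}{\binom{n+d}{d}}\mathbb{P}(N=n)$, with $a_0=\mathbb{P}(N=0)$, whose generating function I write $f_0(s)=\mathbb{P}(N=0)+\sum_{y=1}^{d+1}a_y s^y$.

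For the upper bound I would dominate the true process by the single-type Galton--Watson process $\mathcal{B}_U$ in which every colony (root included) reproduces according to $f_0$. This dominates because at its catastrophe a colony can colonise at most all $d+1$ of its neighbours, and it colonises exactly $a_y$-many when every neighbour happens to be empty; any occupied neighbour can only lower the count, so a monotone coupling of the survivor allocations shows the true per-colony offspring count is stochastically at most an independent $a_y$-draw. Standard branching theory then identifies the extinction probability with the smallest non-negative root $\psi$ of $f_0(s)=s$, that is of $\sum_{y=1}^{d+1}a_y s^y=s-\mathbb{P}(N=0)$, whence $\mathbb{P}(V_d)\le 1-\psi$.

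For the lower bound I would instead keep only forward dispersal. A non-origin colony has one parent neighbour and $d$ child neighbours; distributing $n$ survivors over all $d+1$ neighbours and counting only how many of the $d$ children are occupied yields $\binom{d}{y}\binom{n}{y}$ favourable configurations out of $\binom{n+d}{d}$ (put one survivor in each of the $y$ chosen children and share the remaining $n-y$ among those children and the parent), so the forward offspring numbers are $b_y:=\binom{d}{y}\sum_{n\ge y}\frac{\binom{n}{y}}{\binom{n+d}{d}}\mathbb{P}(N=n)$ with generating function $g(s)=\sum_{y=0}^{d}b_y s^y$; the Vandermonde identity $\sum_{y}\binom{d}{y}\binom{n}{y}=\binom{n+d}{d}$ confirms $g(1)=1$. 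The crucial geometric fact is that on a tree the first colony to appear at distance $k$ from the origin must arrive forward from distance $k-1$, and a colony disperses exactly once, so at the instant a colony disperses all of its children are still empty; hence the forward colonies form a genuine embedded Galton--Watson process $\mathcal{B}_L$ whose root (the origin, with $d+1$ empty neighbours) reproduces by $a_y$ and whose every other colony reproduces by $b_y$. Letting $\rho$ be the extinction probability of a $g$-tree (the smallest non-negative root of $g(s)=s$), the survival probability of $\mathcal{B}_L$ equals $\sum_{r=1}^{d+1}a_r(1-\rho^r)$, and since $\mathcal{B}_L$ embeds in the true process, $\mathbb{P}(V_d)\ge\sum_{r=1}^{d+1}a_r(1-\rho^r)$, the asserted lower bound.

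The main obstacle is the rigour of the two couplings rather than the algebra. For $\mathcal{B}_L$ one must justify that forward children are genuinely empty at each dispersal instant and that backward re-colonisations of an already-dead parent can only help survival, so that $\mathcal{B}_L$ really embeds; the delicate point is precisely that a colony disperses only once and that first passage to each vertex is forward, which forbids a forward child being occupied beforehand. For $\mathcal{B}_U$ one must construct the monotone coupling showing that occupancy of neighbours never raises the offspring count, so that the $a_y$-law dominates the true per-colony reproduction uniformly over the history. Once these two domination statements are established, both bounds follow from the elementary characterisation of the extinction probability as the smallest fixed point of the offspring generating function.
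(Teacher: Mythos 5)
Your proposal is correct and follows essentially the same route as the paper: your embedded forward process $\mathcal{B}_L$ (root reproducing by the $a_y$-law on $d+1$ slots, all other colonies by the $b_y$-law on $d$ forward slots) is exactly the paper's \emph{Move Forward or Die} model $\low$, and your dominating single-type process $\mathcal{B}_U$ is exactly the paper's \emph{Self Avoiding} model $\upPd$ on $\mathbb{T}^{d+1}_+$, where every colony always sees $d+1$ empty forward neighbours. The couplings you flag as the main obstacle are precisely the two stochastic dominations the paper constructs (and likewise only sketches) in Section 3, after which both bounds follow, as in your argument, from the smallest-fixed-point characterization of Galton--Watson extinction probabilities applied to the two generating functions defining $\rho$ and $\psi$.
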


\begin{exa}
Consider C$(\mathbb{T}^{10}; \mathcal{P}(5), \text{Bin}(\frac{3}{5}))$. Then, 
\begin{displaymath}
\mathbb{P}(N = n ) = \left\{
                      \begin{array}{ll}
                       \frac{1}{10}, & \hbox{$n=0$;} \\
                        \frac{3}{10}\left(\frac{3}{4} \right )^n, & \hbox{$n \geq 1$.}                        
                      \end{array}
                    \right.
\end{displaymath}
Applying Theorem \ref {T: MCCHPE2} we have $\psi = 0.141484$ and $\rho = 0.162176$. So,
\begin{align*}
\frac{3}{10}\sum_{r=1}^{11} \left [(1 - 0.162176^r)\binom{11}{r}\sum_{n=r}^{\infty}\frac{\binom{n-1}{r-1}}{\binom{n+10}{10}} \left (\frac{3}{4} \right)^n \right] & \leq \mathbb{P}(V_{10}) \leq 1 - 0.141484 \\ 0.85153 \leq \mathbb{P}(V_{10}) \leq 0.858516.
\end{align*}
\end{exa}

\begin{teo}
\label{T: MCCHPE1L}
Consider $\proc$ and let $V_d$ be the survival event. We have that
\[ \lim_{d \to \infty} \mathbb{P}(V_d) = 1 - \nu
\]
where  $\nu$ is the smallest non-negative solution of $ \mathbb{E}(s^N) = s.$
\end{teo}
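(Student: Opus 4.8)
The plan is to sandwich $\mathbb{P}(V_d)$ between the two bounds supplied by Theorem \ref{T: MCCHPE2} and to show that both bounds converge to $1-\nu$. The first step is a rewriting: writing $g_d$ for the offspring generating function of the root colony, namely $g_d(s)=\mathbb{P}(N=0)+\sum_{y=1}^{d+1} s^y\binom{d+1}{y}\sum_{n\ge y}\frac{\binom{n-1}{y-1}}{\binom{n+d}{d}}\mathbb{P}(N=n)$, and keeping $\psi_d,\rho_d$ for the smallest non-negative fixed points in that theorem, one checks by a one-line manipulation that the lower bound equals $1-g_d(\rho_d)$ (use $\sum_{y\ge1}\binom{d+1}{y}\sum_{n\ge y}\frac{\binom{n-1}{y-1}}{\binom{n+d}{d}}\mathbb{P}(N=n)=1-\mathbb{P}(N=0)$). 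The upper bound is $1-\psi_d$. Thus it suffices to prove $\psi_d\to\nu$ and $g_d(\rho_d)\to\nu$.

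The key probabilistic input is that, as the number of dispersal sites grows, the number of colonies founded by the survivors of a catastrophe converges to the number of survivors itself: collisions of two survivors on one site, and the waste of a survivor on the parent site, both become negligible as $d\to\infty$. Concretely I would show that $g_d$ and $h_d(s)=\sum_{y=0}^{d}s^y\binom{d}{y}\sum_{n\ge y}\frac{\binom{n}{y}}{\binom{n+d}{d}}\mathbb{P}(N=n)$ both converge pointwise on $[0,1]$ to $f(s):=\mathbb{E}(s^{N})$. This rests on the elementary asymptotics $\binom{d}{y}\binom{n}{y}/\binom{n+d}{d}\to\mathbf{1}\{y=n\}$ (with the analogous estimate for the root coefficients) as $d\to\infty$ for each fixed $n$, which says precisely that the conditional law of the number of successful colonizations given $N=n$ converges to the point mass at $n$; since these conditional generating functions are bounded by $1$, dominated convergence transfers the limit to $g_d$ and $h_d$.

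With these convergences in hand I would deduce convergence of the smallest fixed points. As $g_d,h_d$ and $f$ are convex and increasing on $[0,1]$, pointwise convergence upgrades to uniform convergence on every $[0,1-\epsilon]$, and I would split according to the mean of $N$. In the supercritical case $\mathbb{E}(N)>1$ one has $\nu<1$ with $f(s)>s$ on $[0,\nu)$, $f(s)<s$ on $(\nu,1)$, and a transversal crossing at $\nu$; the intermediate value theorem applied to $h_d(s)-s$ and $g_d(s)-s$, together with the uniform gap of $f(s)-s$ away from $\nu$, then forces $\rho_d,\psi_d\to\nu$, after which uniform convergence on a neighbourhood of $\nu$ yields $g_d(\rho_d)\to f(\nu)=\nu$, so both bounds tend to $1-\nu$. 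In the subcritical or critical case $\mathbb{E}(N)\le1$ one has $\nu=1$ and $f(s)>s$ on $[0,1)$; uniform convergence on $[0,1-\epsilon]$ shows that $g_d$ and $h_d$ eventually exceed the diagonal there, so $\psi_d,\rho_d>1-\epsilon$ for large $d$, whence $\psi_d,\rho_d\to1=\nu$, and the upper bound $1-\psi_d\to0$ already pins $\mathbb{P}(V_d)\to0=1-\nu$ (the lower bound being nonnegative).

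The step I expect to be the main obstacle is the passage from convergence of the generating functions to convergence of their smallest fixed points: one must rule out that $\rho_d$ or $\psi_d$ is trapped at a spurious fixed point bounded away from $\nu$, and one must treat correctly the tangency that can arise in the critical case $\mathbb{E}(N)=1$. The combinatorial asymptotics and the rewriting of the lower bound are routine; the care is concentrated in this fixed-point continuity argument, where convexity of the generating functions does the essential work.
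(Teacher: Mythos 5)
Your argument is correct for non-degenerate $N$ and is, at its core, the same sandwich the paper uses: the paper bounds $\proc$ below by the Move Forward or Die process $\lowP$ and above by the Self Avoiding process $\upPd$, and then invokes Propositions~\ref{P: UlimPS} and~\ref{P: LlimPS}, which state that the survival probabilities of the two bounding processes tend to $1-\nu$. The difference is where the work is done. The paper proves those two propositions in one line each, by observing $Y \overset{\mathcal D}{\to} N$ and $Y_R \overset{\mathcal D}{\to} N$ and citing \cite[Proposition 4.2]{MRV2018} for the continuity of extinction probabilities under convergence in distribution of the offspring law. You instead prove exactly that continuity statement from scratch: you verify the offspring convergence via the combinatorial asymptotics $\binom{d}{y}\binom{n}{y}/\binom{n+d}{d}\to\mathbf{1}\{y=n\}$ (which the paper asserts without computation), upgrade pointwise convergence of the generating functions to uniform convergence on $[0,1-\epsilon]$ (convexity works; monotonicity plus continuity of the limit, \`a la P\'olya, would do as well), and then run the fixed-point continuity argument, correctly splitting into the transversal supercritical case and the one-sided subcritical/critical case where only the upper bound $1-\psi_d\to 0$ is needed. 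Your rewriting of the lower bound of Theorem~\ref{T: MCCHPE2} as $1-g_d(\rho_d)$ is a correct Vandermonde computation. What your route buys is self-containment and an explicit account of where convexity enters; what the paper's route buys is brevity, at the cost of leaning on an external lemma.

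There is one concrete slip: your dichotomy ``$\mathbb{E}(N)\le 1$ implies $\nu=1$'' fails in the degenerate case $\mathbb{P}(N=1)=1$, where $f$ is the identity and $\nu=0$. There your sandwich is genuinely vacuous, not just incomplete: the Move Forward or Die process is strictly subcritical for every $d$ (its offspring mean is $d/(d+1)$), so $\rho_d=1$ and the lower bound is $0$, while $Y_R\equiv 1$ gives $\psi_d=0$ and the upper bound is $1$. The theorem is still true in this case --- the unique surviving colony simply performs a random walk on the tree and never encounters an occupied vertex, so $\mathbb{P}(V_d)=1=1-\nu$ --- but it requires this separate one-line observation rather than the sandwich. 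You should either add the standing assumption $\mathbb{P}(N=1)<1$ (automatic for the Poisson--binomial case~(\ref{distparticular})) or append that patch. It is worth noting that the same degeneracy silently defeats the paper's own route: Proposition~\ref{P: LlimPS} is false as stated when $\mathbb{P}(N=1)=1$, since the survival probability of $\lowP$ then tends to $0$, not to $1-\nu=1$; so the non-degeneracy hypothesis is implicitly assumed there too. With that assumption made explicit, your proof is complete and correct.
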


\begin{cor}
\label{C: MCC1HPS}
Consider $\procp$ and let $V_d$ be the survival event. Then
\[ \lim_{d \to \infty} \mathbb{P}(V_d) = \max \left \{ 0, \frac{p(\lambda +1) -1}{\lambda p} \right \}.
\]
\end{cor}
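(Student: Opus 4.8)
The plan is to deduce this directly from Theorem \ref{T: MCCHPE1L} by computing the relevant fixed point explicitly for the Poisson--binomial specialization. By Theorem \ref{T: MCCHPE1L}, for $\procp$ we have $\lim_{d\to\infty}\mathbb{P}(V_d) = 1 - \nu$, where $\nu$ is the smallest non-negative solution of $\mathbb{E}(s^N) = s$. So the whole task reduces to locating this fixed point when the generating function of $N$ is the one recorded in \eqref{fgpparticular}.

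First I would substitute \eqref{fgpparticular} into the fixed-point equation, obtaining
\[
\frac{1 - p(1-s)}{1 + \lambda p(1-s)} = s.
\]
Clearing the denominator turns this into a quadratic in $s$; a direct expansion gives
\[
\lambda p\, s^2 + (p - 1 - \lambda p)\, s + (1 - p) = 0.
\]
The key structural observation is that $s = 1$ is always a root, which is forced since $\mathbb{E}(s^N)$ is a probability generating function and $1$ is necessarily a fixed point. Factoring out $(s-1)$, or equivalently using that the product of the roots equals $(1-p)/(\lambda p)$ by Vieta's formulas, identifies the second root as $s = (1-p)/(\lambda p)$.

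The remaining step, and the only one requiring genuine care, is to decide which of the two roots $1$ and $(1-p)/(\lambda p)$ is the smallest non-negative one, and then to evaluate $1 - \nu$. Since $p \in (0,1]$, the second root is always non-negative, so the comparison hinges on the sign of $(1-p)/(\lambda p) - 1$, that is, on whether $p > 1/(\lambda+1)$. When $p > 1/(\lambda+1)$, the second root is strictly less than $1$, so $\nu = (1-p)/(\lambda p)$ and $1-\nu = (p(\lambda+1)-1)/(\lambda p) > 0$; when $p \leq 1/(\lambda+1)$, the second root is at least $1$, so $\nu = 1$ and $1 - \nu = 0$. Since $(p(\lambda+1)-1)/(\lambda p) \leq 0$ precisely in this latter regime, the two cases combine into
\[
1 - \nu = \max\left\{0,\ \frac{p(\lambda+1)-1}{\lambda p}\right\},
\]
which is the claimed limit. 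I anticipate no serious obstacle here: the argument is essentially careful root-selection in a quadratic, the only subtlety being the threshold $p = 1/(\lambda+1)$ at which the surviving phase disappears.
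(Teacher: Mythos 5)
Your proposal is correct and follows exactly the route the paper intends: the paper's proof of Corollary~\ref{C: MCC1HPS} simply cites Theorem~\ref{T: MCCHPE1L} together with Equation~(\ref{fgpparticular}), and your argument is precisely the fleshed-out version of that citation, solving the fixed-point equation $\mathbb{E}(s^N)=s$ as a quadratic with roots $1$ and $(1-p)/(\lambda p)$ and selecting the smaller via the threshold $p=1/(\lambda+1)$ (consistent with $\mathbb{E}(N)=p(\lambda+1)$). Your root-selection and the resulting case analysis yielding $1-\nu=\max\{0,(p(\lambda+1)-1)/(\lambda p)\}$ are accurate, so there is nothing to correct.
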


\subsection{The reach of the process}

We define $M_d$ as the reach of the process ($\text{C}(\mathbb{T}^d;\mathcal{N},\mathcal{L})$
or $\text{C}(\mathbb{T}^d_+;\mathcal{N},\mathcal{L})$), which, in simpler therms, represents the distance from the origin to the farthest vertex where a colony is formed. Observe $M_d$ is infinite if and only if the process survives. 

To study properties of $M_d$ we need the definitions of a few technical quantities.   

\begin{defn}\label{parametros}
Consider $\procR$ We define the quantities
\[\begin{array}{cll}
     \alpha &:=& d \mathbb{E} \left[\frac{N}{N+d}\right] \\ \\
     \beta  &:=& (d+1) \mathbb{E} \left [\frac{N}{N+d}\right] = \alpha + \mathbb{E} \left [\frac{N}{N+d}\right]\\  \\
     D &:=& \max \left \{ 2; \frac{\beta}{\beta - \mathbb{P}(N \neq 0)} \right\} \\ \\
B& :=& d(d-1)\left[\mathbb{E} \left(\frac{N(N-1)}{(N+d-1)(N+d-2)} \right) \right] 
\end{array}\]
\end{defn}

\begin{lem}\label{lemparticular} For $\procpR$, the quantities of the Definition~ \ref{parametros} are given by
\[\begin{array}{cll}
\alpha &=& \frac{dp(\lambda +1)}{(\lambda p +1)^2} \left [ \lambda p + 1 -d\Phi \left( \frac{\lambda p }{\lambda p + 1}, 1,d+1\right ) \right ] \\ \\
\beta &=& \frac{(d+1)p(\lambda +1)}{(\lambda p +1)^2} \left [ \lambda p + 1 -d\Phi \left( \frac{\lambda p }{\lambda p + 1}, 1,d+1\right ) \right ] \\ \\
D &=& \max \left \{ 2; \frac{\beta(\lambda p+1)}{\beta + p(\beta \lambda -\lambda -1)} \right \} \\ \\
B &=& \frac{d\lambda p^2(d-1)(\lambda +1)}{(\lambda p + 1)^3} \left [ \frac{ d^2 +d(\lambda p -2) +2}{d} - \frac{(d-1)(d + 2\lambda p)}{(\lambda p + 1)} \Phi \left( \frac{\lambda p }{\lambda p + 1}, 1,d+1\right ) \right ] 
\end{array}\]
  \end{lem}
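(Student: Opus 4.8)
The plan is to treat this as a direct computation, substituting the explicit law of $N$ from \eqref{distparticular} into each of the four quantities of Definition~\ref{parametros} and reducing every resulting series to the Lerch transcendent $\Phi\!\left(\frac{\lambda p}{\lambda p+1},1,d+1\right)$. Throughout I will abbreviate $q:=\frac{\lambda p}{\lambda p+1}$ and $c:=\frac{\lambda+1}{\lambda(\lambda p+1)}$, so that $\mathbb{P}(N=n)=cq^{n}$ for $n\geq 1$ and $1-q=\frac{1}{\lambda p+1}$.

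First I would handle $\alpha$ and $\beta$, which both reduce to computing $\mathbb{E}\!\left[\frac{N}{N+d}\right]=c\sum_{n\geq1}\frac{n}{n+d}q^{n}$. Writing $\frac{n}{n+d}=1-\frac{d}{n+d}$ splits this into a geometric series $\sum_{n\geq1}q^{n}=\frac{q}{1-q}=\lambda p$ and a tail $\sum_{n\geq1}\frac{q^{n}}{n+d}$; reindexing $j=n-1$ identifies the latter as $q\,\Phi(q,1,d+1)$. Collecting terms gives $\mathbb{E}\!\left[\frac{N}{N+d}\right]=\frac{p(\lambda+1)}{(\lambda p+1)^{2}}\bigl[\lambda p+1-d\,\Phi(q,1,d+1)\bigr]$, from which the stated formulas for $\alpha=d\,\mathbb{E}[\cdot]$ and $\beta=(d+1)\,\mathbb{E}[\cdot]$ follow immediately. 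For $D$, I would compute $\mathbb{P}(N\neq0)=1-\frac{1-p}{\lambda p+1}=\frac{p(\lambda+1)}{\lambda p+1}$ and then clear the factor $\lambda p+1$ inside $\frac{\beta}{\beta-\mathbb{P}(N\neq0)}$, turning the denominator into $\beta(\lambda p+1)-p(\lambda+1)=\beta+p(\beta\lambda-\lambda-1)$, which is exactly the claimed expression.

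The real work is $B=d(d-1)\,c\sum_{n\geq2}\frac{n(n-1)}{(n+d-1)(n+d-2)}q^{n}$. I would begin with the partial-fraction identity $\frac{n(n-1)}{(n+d-1)(n+d-2)}=1-\frac{d(d-1)}{n+d-1}+\frac{(d-1)(d-2)}{n+d-2}$, obtained by polynomial division followed by taking residues at the poles $n=1-d$ and $n=2-d$. Summing against $q^{n}$ over $n\geq2$ produces $\frac{q^{2}}{1-q}$ together with two Lerch tails, which reindexing identifies as $q^{2}\Phi(q,1,d+1)$ and $q^{2}\Phi(q,1,d)$ respectively. The one subtlety is that two \emph{different} Lerch transcendents appear; I would remove this using the contiguity relation $\Phi(q,1,d)=\frac{1}{d}+q\,\Phi(q,1,d+1)$, which follows by peeling off the $j=0$ term of the defining series. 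After substituting and factoring out $q^{2}$, the non-$\Phi$ part collapses to $\frac{d^{2}+d(\lambda p-2)+2}{d}$ and the coefficient of $\Phi$ to $-\frac{(d-1)(d+2\lambda p)}{\lambda p+1}$ once $q$ is written back in terms of $\lambda,p$; multiplying by the prefactor $d(d-1)cq^{2}=\frac{d\lambda p^{2}(d-1)(\lambda+1)}{(\lambda p+1)^{3}}$ yields the asserted formula.

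I expect the main obstacle to be purely the bookkeeping in this last step: keeping the two Lerch functions straight, applying the contiguity relation at the right moment, and carrying out the algebraic collapse of both the rational and transcendental parts without sign errors. None of this is conceptually hard, but it is where a computation can most easily go astray, so I would verify the final bracket by re-expanding $(d-1)[-d+(d-2)q]$ and $(\lambda p+1)+\frac{(d-1)(d-2)}{d}$ separately as an independent consistency check.
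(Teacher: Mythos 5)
Your computation is correct: the paper states Lemma~\ref{lemparticular} without proof (it is treated as a routine calculation), and your direct substitution of \eqref{distparticular} is exactly the intended argument. I checked the key steps --- the split $\frac{n}{n+d}=1-\frac{d}{n+d}$ giving $\mathbb{E}\left[\frac{N}{N+d}\right]=\frac{p(\lambda+1)}{(\lambda p+1)^{2}}\left[\lambda p+1-d\,\Phi\left(\frac{\lambda p}{\lambda p+1},1,d+1\right)\right]$, the simplification $\beta(\lambda p+1)-p(\lambda+1)=\beta+p(\beta\lambda-\lambda-1)$ for $D$, the partial-fraction identity $\frac{n(n-1)}{(n+d-1)(n+d-2)}=1-\frac{d(d-1)}{n+d-1}+\frac{(d-1)(d-2)}{n+d-2}$, and the contiguity relation $\Phi(q,1,d)=\frac{1}{d}+q\,\Phi(q,1,d+1)$ that collapses the two Lerch tails, with $d(d-1)cq^{2}=\frac{d\lambda p^{2}(d-1)(\lambda+1)}{(\lambda p+1)^{3}}$ --- and they all reproduce the stated formulas, including the bracket $\frac{d^{2}+d(\lambda p-2)+2}{d}-\frac{(d-1)(d+2\lambda p)}{\lambda p+1}\,\Phi$ for $B$.
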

 
\begin{teo}
\label{T: alcance}
Consider $M_d$, the reach of $\procR$.  Assume that \begin{displaymath}
\mathbb{E} \left [\frac{N}{N+d}\right]  < \frac{1}{d+1}.
\end{displaymath}
We have that 

\begin{displaymath}
\frac{[1+D(1-\beta)][1-\beta^{n+1}]}{1+ D(1-\beta)-\beta^{n+1}} \leq \mathbb{P}(M_d \leq n) \leq \frac{[1 + \frac{\alpha(1-\alpha)}{B}](1-\alpha^{n+1})}{ 1 + \frac{\alpha(1-\alpha)}{B} - \alpha^{n+1} }
\end{displaymath}

where $\alpha, \beta, B$ and $D$ are given in Definition~ \ref{parametros}. 
\end{teo}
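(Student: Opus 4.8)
The plan is to reduce the law of the reach $M_d$ to the extinction behaviour of an embedded branching recursion and then to trap that recursion between two fractional-linear (M\"obius) branching processes, whose $n$-step extinction probabilities are exactly the closed forms in the statement. Rooting $\mathbb{T}^d_+$ at the origin and recording colonies by the depth at which they are born, the event $\{M_d\le n\}$ is precisely the event that generation $n+1$ of the forward spread is empty, so with $a_n:=\mathbb{P}(M_d\le n)$ the self-similarity of the rooted tree yields a recursion $a_n=g(a_{n-1})$ driven by a forward-offspring generating function $g(s)=\mathbb{E}[s^{Y}]$, $Y$ being the number of the $d$ children a colony colonises at its catastrophe. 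Two features spoil exact self-similarity: the origin has degree $d$ rather than $d+1$, and a survivor may recolonise the parent vertex and thereby feed additional forward progress. I would neutralise both by sandwiching the true reach, in the sense of stochastic domination, between a forward-only branching process (each colony acts only through its $d$ children, mean offspring $\alpha=d\,\mathbb{E}[N/(N+d)]$) and a more generous one (every survivor, including one recolonising the parent, may advance the front, mean $\beta=(d+1)\,\mathbb{E}[N/(N+d)]$). Since a stochastically larger offspring law reaches further, the forward-only process yields the \emph{upper} bound for $a_n$ and the generous one the \emph{lower} bound.

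The computational backbone is the exact solvability of fractional-linear branching processes. For an offspring generating function $f(s)=1-m\,\dfrac{1-s}{1+\gamma(1-s)}$, which has mean $f'(1)=m$ and second factorial moment $f''(1)=2m\gamma$, the substitution $V=1/(1-s)$ conjugates the map $s\mapsto f(s)$ to the affine map $V\mapsto V/m+\gamma/m$. Iterating the affine map $n+1$ times from $V_0=1$ (that is, from $s=0$) and returning to the original variable gives
\[
f_{n+1}(0)=\frac{(1+c)\bigl(1-m^{\,n+1}\bigr)}{\,1+c-m^{\,n+1}\,},\qquad c=\frac{1-m}{\gamma}.
\]
I would then match parameters: for the upper bound take $m=\alpha$ and $\gamma=B/\alpha$, so that $c=\alpha(1-\alpha)/B$ and the curvature of the comparison is fixed by the second factorial moment $B$ of the forward offspring; for the lower bound take $m=\beta$ and $\gamma=1/D$, so that $c=D(1-\beta)$. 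These are exactly the two right-hand sides of the theorem, and the hypothesis $\mathbb{E}[N/(N+d)]<1/(d+1)$ guarantees $\beta<1$, hence $m<1$, so both expressions increase to $1$, consistent with $M_d<\infty$ almost surely.

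To transfer these closed forms to $a_n$ I would invoke the elementary monotonicity of branching iterates: if two generating functions satisfy $f\le \tilde f$ on $[0,1]$ then $f_{k}(0)\le \tilde f_{k}(0)$ for all $k$, by induction using that each iterate is nondecreasing. Hence the theorem reduces to two \emph{pointwise} generating-function inequalities on $[0,1]$: the forward-offspring function $g_{\mathrm{fwd}}$ lies below the upper comparison $\bar f$ (giving $a_n\le \bar f_{n+1}(0)$), and the lower comparison $\underline f$ lies below the generous-offspring function $g_{\mathrm{gen}}$ (giving $a_n\ge \underline f_{n+1}(0)$). The role of $D=\max\{2,\ \beta/(\beta-\mathbb{P}(N\neq0))\}$ from Definition~\ref{parametros} becomes transparent at $s=0$: the generous process produces no offspring exactly when $N=0$, so $g_{\mathrm{gen}}(0)=\mathbb{P}(N=0)$, and with this $D$ the required inequality $\underline f(0)\le\mathbb{P}(N=0)$ simplifies to $(\beta-\mathbb{P}(N\neq0))^2\ge0$, while the floor $D\ge2$ keeps $\underline f$ a legitimate generating function.

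The main obstacle I anticipate is precisely the verification of these pointwise inequalities on all of $[0,1]$. Matching the value and slope at the fixed point $s=1$ (through the means $\alpha,\beta$) and the curvature there (through $B$) controls the comparison only near $s=1$, so a global sign analysis of the differences $\bar f-g_{\mathrm{fwd}}$ and $g_{\mathrm{gen}}-\underline f$ is needed to exclude interior crossings; I expect to carry this out using the convexity of the uniform-dispersion generating functions together with the explicit factorial moments computed from $\mathbb{P}(Y=y\mid N=n)=\binom{d}{y}\binom{n-1}{y-1}\big/\binom{n+d-1}{n}$. A secondary, bookkeeping obstacle is to make rigorous the stochastic domination that separates genuine forward branching from the extra push due to parent-recolonisation, and to absorb the degree-$d$ origin into the first step without disturbing the $m^{\,n+1}$ scaling; once these comparisons are established, combining them with the displayed closed forms yields the stated two-sided bound.
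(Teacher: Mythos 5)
Your proposal follows essentially the same route as the paper's own proof: sandwich $\procR$ between the move-forward-or-die process (mean $\alpha$, whose reach gives the upper bound on $\mathbb{P}(M_d\le n)$) and the more generous self-avoiding process with the extra forward direction (mean $\beta$, giving the lower bound), identify $\mathbb{P}(M\le n)$ with a generation-$(n+1)$ extinction probability, and bound it by iterated fractional-linear generating functions --- your closed-form iteration and parameter matching ($c=\alpha(1-\alpha)/B$ and $c=D(1-\beta)$, including the $(\beta-\mathbb{P}(N\neq0))^2\ge 0$ check behind $D$) reproduce exactly the bounds of Agresti \cite[Theorem 1]{AA}, which the paper invokes as a black box in Propositions~\ref{P: LIMTSRSR2} and~\ref{P: LIMTSRCR2} instead of re-deriving. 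The global pointwise pgf comparisons you flag as your ``main obstacle'' are precisely the content of that citation, so your plan is sound and needs no new analysis there; the one small correction is that the paper's $B$ is the second factorial moment of the \emph{self-avoiding} offspring, $d(d-1)\,\mathbb{E}\bigl[N(N-1)/((N+d-1)(N+d-2))\bigr]$, not of the forward offspring (whose denominator is $(N+d-1)(N+d)$), but since the Agresti upper bound is monotone in this second-moment parameter, the sharper bound your matching would produce implies the stated one.
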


\begin{teo}\label{T1: LEITL}
Let $M_d$ be the reach of  $\procR$. When $d \to \infty$ we have that
\begin{displaymath}
M_d \overset{\mathcal{D}}{\to} M,
\end{displaymath}
where $\mathbb{P}( M \leq m) = g_{m+1}(0)$, $g(s) = \mathbb{E}(s^N) $ and $g_{m+1}(s) = \overset{ m+1 \textrm { times }} {g(g(\cdots g(s)) \cdots )}$.
\end{teo}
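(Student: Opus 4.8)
The plan is to recognise the limiting object as a Galton--Watson process and to push the limit $d\to\infty$ through the finitely many generations that determine the event $\{M_d\le n\}$. As in the analysis underlying Theorem~\ref{T: MCCHPE2}, the genealogy of colonies in $\procR$ is a branching tree: the colony at the origin disperses on its $d$ neighbours and produces a random number of offspring colonies with some probability generating function $f_d$, while every colony born away from the origin independently produces offspring with generating function
\[
h_d(s)=\sum_{y=0}^{d}s^y\binom{d}{y}\sum_{n=y}^{\infty}\frac{\binom{n}{y}}{\binom{n+d}{d}}\mathbb{P}(N=n),
\]
the inner law being exactly the number of colonised children computed in Theorem~\ref{T: MCCHPE2}. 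Since the tree structure prevents two distinct colonies from ever competing for the same vertex, these offspring counts are genuinely independent, and $M_d$ is the generation at which this tree dies out. Hence $\{M_d\le n\}$ is the event that the whole tree has height at most $n$, which gives the recursive identity $\mathbb{P}(M_d\le n)=f_d\big(h_d^{(n)}(0)\big)$, where $h_d^{(n)}$ denotes the $n$-fold composition of $h_d$ and $h_d^{(0)}$ the identity.

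Next I would show that the offspring law converges to that of $N$. As $d\to\infty$ the dispersion becomes \emph{faithful}: given $N=n$ survivors, the chance that all of them land on distinct empty children tends to $1$. Quantitatively, $\mathbb{P}(Y_d=y\mid N=n)=\binom{n}{y}\binom{d}{y}/\binom{n+d}{d}\to\mathbf{1}\{y=n\}$, because $\binom{d}{y}\sim d^{y}/y!$ while $\binom{n+d}{d}\sim d^{n}/n!$, so the ratio is of order $d^{\,y-n}$. Summing against $\mathbb{P}(N=n)$ by dominated convergence yields $\mathbb{P}(Y_d=y)\to\mathbb{P}(N=y)$ for every $y$, and the identical reasoning applies to $f_d$. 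Because these are probability mass functions all summing to $1$, Scheff\'e's lemma upgrades the pointwise convergence to convergence in total variation, whence
\[
\sup_{s\in[0,1]}\big|h_d(s)-g(s)\big|\le\sum_{y\ge 0}\big|\mathbb{P}(Y_d=y)-\mathbb{P}(N=y)\big|\longrightarrow 0,
\]
and likewise $\sup_{s\in[0,1]}|f_d(s)-g(s)|\to 0$, where $g(s)=\mathbb{E}(s^N)$.

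Finally I would iterate. With $g$ continuous on $[0,1]$ and the uniform bounds above, an induction on $n$ passes the limit through the composition: writing $x_d^{(n)}=h_d^{(n)}(0)$, one has $|x_d^{(n)}-g_n(0)|\le\sup_{[0,1]}|h_d-g|+|g(x_d^{(n-1)})-g(g_{n-1}(0))|$, whose first term vanishes by uniform convergence and whose second vanishes by continuity of $g$ together with the inductive hypothesis $x_d^{(n-1)}\to g_{n-1}(0)$. Thus $h_d^{(n)}(0)\to g_n(0)$, and consequently $\mathbb{P}(M_d\le n)=f_d\big(h_d^{(n)}(0)\big)\to g\big(g_n(0)\big)=g_{n+1}(0)$. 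Since this holds for every integer $n\ge 0$ and $M$ takes values in $\{0,1,2,\dots\}\cup\{\infty\}$, convergence of all these probabilities is exactly $M_d\overset{\mathcal{D}}{\to}M$ with $\mathbb{P}(M\le m)=g_{m+1}(0)$. The main obstacle is this very interchange: each factor $h_d$ is evaluated at the $d$-dependent point $h_d^{(n-1)}(0)$, so pointwise convergence of the generating functions would not suffice; the crux is to secure \emph{uniform} convergence on $[0,1]$, which the total-variation estimate furnished by Scheff\'e's lemma delivers at no extra cost.
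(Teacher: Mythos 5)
Your limit-interchange machinery in the second and third steps is correct, and in fact it amounts to a self-contained proof of \cite[Proposition 4.1]{MRV2018}: the computation $\mathbb{P}(Y_d=y\mid N=n)=\binom{n}{y}\binom{d}{y}/\binom{n+d}{d}\to\mathbf{1}\{y=n\}$, the upgrade to uniform convergence of the generating functions via Scheff\'e, and the induction pushing the limit through the $n$-fold composition are all sound. The genuine gap is the first step: $\procR$ is \emph{not} a branching process, so the recursion $\mathbb{P}(M_d\le n)=f_d\bigl(h_d^{(n)}(0)\bigr)$ is false for the actual process. In $\mathbb{T}^d_+$ every non-origin vertex has $d+1$ neighbours and survivors disperse uniformly over \emph{all} of them, including the parent; an individual sent toward the origin is not automatically lost, since the parent vertex may be empty at catastrophe time and then a new colony is founded there. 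Moreover, colonies do interact through occupancy: an individual landing on an occupied vertex dies, two colonies (say at $x$ and at the grandparent of $x$) can compete for the common neighbour between them, and a vertex can be re-colonized after its colony has died. So the offspring counts are neither independent nor distributed according to your $h_d$ --- the law you wrote down is exactly the generating function $G_L$ of the auxiliary process $\lowP$, in which backward-moving individuals are \emph{killed by fiat}; that is a lower bound for $\procR$, not the process itself.

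This is precisely why the paper argues by a stochastic-domination sandwich rather than by a direct branching identification: it couples $\procR$ so that it dominates the move-forward-or-die process $\lowP$ and is dominated by the self-avoiding process $\upPd$ on $\mathbb{T}^{d+1}_+$ (where a backward-dispersing group founds a colony on a fresh extra vertex instead of dying). Both bounding processes \emph{are} genuine branching processes, their reaches converge in distribution to the same limit $M$ with $\mathbb{P}(M\le m)=g_{m+1}(0)$ because in both cases $Y\overset{\mathcal D}{\to}N$ as $d\to\infty$ (Propositions \ref{P: LIMTSRSR2} and \ref{P: LIMTSRCR2}), and since $\mathbb{P}(M^{U}_d\le n)\le\mathbb{P}(M_d\le n)\le\mathbb{P}(M^{L}_d\le n)$ with both outer terms converging to $g_{n+1}(0)$, the squeeze yields the theorem. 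Your argument becomes correct if you redirect it: apply your Scheff\'e-plus-iteration scheme to each of $\lowP$ and $\upPd$ (where the branching recursion you wrote is legitimate), and supply the domination couplings to transfer the conclusion to $\procR$. As written, the claim that ``the tree structure prevents two distinct colonies from ever competing for the same vertex'' is the step that fails, and nothing in the proposal controls the backward-colonization and occupancy effects whose negligibility as $d\to\infty$ is exactly what the sandwich is there to establish.
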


\begin{exa}
Let $M_d$ be the reach of $\procpR$. We have that $M_d \overset{\mathcal{D}}{\to} M$ where
\[
\mathbb{P}(M \leq n) = \frac{1- [(\lambda +1)p]^{n+1}}{1 - \frac{\lambda p}{1-p}[(\lambda +1)p]^{n+1}} \textrm{ and } \mathbb{E}(M) = \frac{(1-p-\lambda p)}{\lambda p} \sum_{n=0}^{\infty}\frac{[(\lambda +1)p]^{n+1}}{\frac{1-p}{\lambda p} - [(\lambda +1)p]^{n+1}}.
\]
\end{exa}

\subsection{Number of colonies}

\begin{teo}
\label{T: MCC2HN}
Consider $\proc$ and let $I_d$ be the number of colonies created during the process. If
\begin{displaymath}
\mathbb{E} \left [ \frac{N}{N+d} \right]  < \frac{1}{d+1}
\end{displaymath}
then
\begin{displaymath}
\frac{1+ \mathbb{E} \left [ \frac{N}{N+d} \right]  }{1- d\mathbb{E} \left [ \frac{N}{N+d} \right] } \leq \mathbb{E}(I_d) \leq \frac{1}{1- (d+1)\mathbb{E} \left [ \frac{N}{N+d} \right] } 
\end{displaymath}
In addition, if $\mathbb{E}(N) < 1$ (the subcritical case),
\[
\lim_{d \to \infty} \mathbb{E}(I_d) = \frac{1}{1 - \mathbb{E}(N)}. \]
\end{teo}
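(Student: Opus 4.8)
The plan is to read the creation of colonies as the total progeny of a Galton--Watson process, and to sandwich the true process between two explicit branching processes obtained by relaxing and by tightening the constraints imposed by the tree geometry. I would first record the combinatorial fact behind the offspring means: if $N$ survivors are spread by uniform dispersion among $m$ neighbouring sites, then each fixed site is occupied with probability $n/(n+m-1)$ given $N=n$, so the expected number of colonised sites among any fixed collection of $k$ of them is $k\,\mathbb{E}[N/(N+m-1)]$. Taking $m=d+1$, a colony that can disperse over all $d+1$ of its neighbours creates on average $(d+1)\mathbb{E}[N/(N+d)]=\beta$ new colonies, while a colony for which the edge towards the origin is blocked can use only $d$ of its $d+1$ neighbours and creates on average $d\,\mathbb{E}[N/(N+d)]=\alpha$. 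These are exactly the quantities of Definition~\ref{parametros}, and I would use the relation $\beta=\alpha+\mathbb{E}[N/(N+d)]$ stated there throughout.

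For the upper bound I would introduce the \emph{expanded} process in which every nonempty target site is declared to start a new colony, whether or not that vertex is already occupied. Under a coupling, every colony of $\proc$ is also a colony of the expanded process, so $I_d$ is dominated by the total progeny of a Galton--Watson process whose individuals all have offspring mean $\beta$. Under the hypothesis $\mathbb{E}[N/(N+d)]<1/(d+1)$ this mean is strictly below $1$, so the elementary identity $\mathbb{E}(\text{total progeny})=(1-\text{mean})^{-1}$ gives
\begin{displaymath}
\mathbb{E}(I_d)\le \frac{1}{1-\beta}=\frac{1}{1-(d+1)\mathbb{E}[N/(N+d)]}.
\end{displaymath}

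For the lower bound I would introduce the \emph{restricted} process, obtained by forbidding both colonisation in the direction of the origin (the parent edge) and any re-colonisation of a vertex that has previously hosted a colony. Both restrictions only delete colonies, so under a coupling the restricted process has at most as many colonies as $\proc$. On the tree, however, the restricted process is an exact branching process: its root disperses over all $d+1$ fresh neighbours and so has offspring mean $\beta$, whereas every later colony has one blocked (parent) edge and $d$ fresh children, hence offspring mean $\alpha$. Splitting the total progeny into the root plus the independent subtrees rooted at the root's offspring gives $\mathbb{E}(I_d)\ge 1+\beta(1-\alpha)^{-1}=(1-\alpha+\beta)/(1-\alpha)=(1+\mathbb{E}[N/(N+d)])/(1-\alpha)$, the claimed lower bound. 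For the asymptotics I would then let $d\to\infty$: since $dN/(N+d)\uparrow N$ pointwise and is dominated by $N$, monotone convergence yields $\alpha\to\mathbb{E}(N)$, while $\mathbb{E}[N/(N+d)]\le\mathbb{E}(N)/d\to 0$ forces $\beta=\alpha+\mathbb{E}[N/(N+d)]\to\mathbb{E}(N)$ as well. When $\mathbb{E}(N)<1$ we have $\beta<1$ for all large $d$, so both bounds apply and both converge to $1/(1-\mathbb{E}(N))$, and the squeeze theorem finishes the proof.

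The main obstacle is making the two couplings precise, and in particular verifying that the restricted process really is a Galton--Watson process on the tree with the stated offspring laws: one must check that once the parent edge is blocked and re-colonisations are forbidden, the $d$ children of any colony are necessarily empty at the instant of its catastrophe, so that no collisions occur and the offspring of distinct colonies are conditionally independent given their survivor counts. Once this is established, the remaining ingredients are only the combinatorial mean computation of the first paragraph and the branching-process identity for the expected total progeny.
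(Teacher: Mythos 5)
Your proposal is correct and takes essentially the paper's approach: your ``restricted'' process is precisely the paper's Move Forward or Die model $\low$ (the extra no-re-colonization rule you impose is automatically satisfied there, since on the tree a forward-only colony disperses exactly once), your ``expanded'' process is a multi-occupancy realization, identical in law, of the self-avoiding model $\upPd$ used for the paper's upper coupling, and both arguments conclude with the same expected-total-progeny identities $1+\beta/(1-\alpha)$ and $1/(1-\beta)$ for the bounding branching processes with the same offspring means $\alpha$ and $\beta$. Your $d\to\infty$ step, squeezing the two explicit bounds via monotone convergence of $\alpha$ and $\beta$ to $\mathbb{E}(N)$, is a slightly more direct substitute for the paper's route through $Y\overset{\mathcal D}{\to}N$, $Y_R\overset{\mathcal D}{\to}N$ and dominated convergence (Propositions~\ref{P: UlimPS} and~\ref{P: LlimPS}), but the overall structure coincides.
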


\begin{cor}
\label{C: MCC1HPSEQ}
Consider $\procp$ and let $I_d$ be the number of colonies created during the process. If 
\[
\Phi \left( \frac{\lambda p }{\lambda p + 1}, 1,d+1\right ) > \frac{(\lambda p +1)[p(\lambda d + d + 1)-1]}{pd(d+1)(\lambda +1)},
\]
then
\begin{displaymath}
\frac{d+\alpha}{d(1-\alpha)} \leq \mathbb{E}(I_d) \leq \frac{d}{d-(d+1)\alpha},   
\end{displaymath}
where $\alpha$ is given in Lemma~\ref{lemparticular}. Besides, if $(\lambda +1)p<1$ then
\begin{align*}
\lim_{d \to \infty}\mathbb{E}(I_d) = \frac{1}{1 -(\lambda +1)p}.
\end{align*}
\end{cor}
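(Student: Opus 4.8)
The plan is to deduce this corollary directly from Theorem~\ref{T: MCC2HN} by specializing the generic expectation $\mathbb{E}\left[\frac{N}{N+d}\right]$ to the Poisson/binomial setting and rewriting everything through the closed forms of Lemma~\ref{lemparticular}. The one fact to keep in mind throughout is that, by Definition~\ref{parametros}, $\alpha = d\,\mathbb{E}\left[\frac{N}{N+d}\right]$, so that $\mathbb{E}\left[\frac{N}{N+d}\right] = \alpha/d$.

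First I would check that the hypothesis of Theorem~\ref{T: MCC2HN}, namely $\mathbb{E}\left[\frac{N}{N+d}\right] < \frac{1}{d+1}$, is exactly the stated Lerch-transcendent inequality. In terms of $\alpha$ this condition reads $\alpha < \frac{d}{d+1}$. Substituting the explicit expression for $\alpha$ from Lemma~\ref{lemparticular} and clearing the (positive) denominators $d$, $p(\lambda+1)$ and $(\lambda p+1)^2$, an elementary rearrangement isolates the term $d\,\Phi\!\left(\frac{\lambda p}{\lambda p+1},1,d+1\right)$ on one side. Factoring out $\frac{\lambda p+1}{d}$ and using the identity $p(\lambda+1)(d+1)-(\lambda p+1) = p(\lambda d + d + 1) - 1$ then turns the inequality into
\[
\Phi\left(\frac{\lambda p}{\lambda p+1},1,d+1\right) > \frac{(\lambda p+1)\,[\,p(\lambda d + d + 1) - 1\,]}{p\,d\,(d+1)\,(\lambda+1)},
\]
which is precisely the hypothesis of the corollary. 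This algebraic identity is the only mildly delicate point of the argument.

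Next, for the two-sided bound I would simply insert $\mathbb{E}\left[\frac{N}{N+d}\right] = \alpha/d$ into the estimate furnished by Theorem~\ref{T: MCC2HN}. The lower bound becomes $\frac{1+\alpha/d}{1-\alpha} = \frac{d+\alpha}{d(1-\alpha)}$ and the upper bound becomes $\frac{1}{1-(d+1)\alpha/d} = \frac{d}{d-(d+1)\alpha}$, which is exactly the claimed display, with $\alpha$ as in Lemma~\ref{lemparticular}.

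Finally, for the limit I would compute $\mathbb{E}(N)$ from the probability generating function \eqref{fgpparticular}. Differentiating $g(s) = \frac{1-p(1-s)}{1+\lambda p(1-s)}$ and evaluating at $s=1$ gives $\mathbb{E}(N) = g'(1) = (\lambda+1)p$. Hence the subcritical condition $\mathbb{E}(N) < 1$ of Theorem~\ref{T: MCC2HN} is exactly $(\lambda+1)p < 1$, and the general limit $\lim_{d\to\infty}\mathbb{E}(I_d) = \frac{1}{1-\mathbb{E}(N)}$ specializes to $\frac{1}{1-(\lambda+1)p}$, completing the proof. Since every step is a substitution or a routine simplification once Theorem~\ref{T: MCC2HN} and Lemma~\ref{lemparticular} are in hand, I expect no genuine obstacle beyond carrying out the rearrangement in the first paragraph correctly.
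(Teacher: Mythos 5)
Your proposal is correct and follows the same route the paper intends for this corollary (which it leaves implicit, in the same style as its one-line proof of Corollary~\ref{C: MCC2HP}): specialize Theorem~\ref{T: MCC2HN} to the Poisson/binomial case, rewriting $\mathbb{E}\left[\frac{N}{N+d}\right]=\alpha/d$ via Lemma~\ref{lemparticular}. Your algebra checks out at every point, including the key identity $p(\lambda+1)(d+1)-(\lambda p+1)=p(\lambda d+d+1)-1$ that converts $\alpha<\frac{d}{d+1}$ into the stated Lerch-transcendent hypothesis, and the computation $\mathbb{E}(N)=g'(1)=(\lambda+1)p$ from \eqref{fgpparticular} for the limit.
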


\begin{exa}
Let $I_d$ be the total number of colonies created in $\procpR$ with $\lambda=9$ and $p=\frac{99}{1000}$. If $d=800$ we have that \[ 74.5761 \leq \mathbb{E}(I_{800}) \leq 82.0181. \] Besides, 
$ \displaystyle \lim_{d \to \infty}\mathbb{E}(I_d) = 100.$
In the model with scheme independent dispersion, \cite[Theorem 3.23]{MRV2018}, we have \[ 81.1729 \leq \mathbb{E}(I_{800}) \leq 90.0900. \] As in example~\ref{E: Id}, due to the intersection of the ranges obtained from the rigorous results currently
available, it is not possible to make a definitive comparison.
\end{exa}

\subsection{Extinction time}

\begin{defn}
Let be $\eta_t$ the process $\text{C}(\mathbb{G};\mathcal{N},\mathcal{L})$. We define the extinction time of the process $\text{C}(\mathbb{G};\mathcal{N},\mathcal{L})$ by 
$$\tau:=\inf\{t>0: |\eta_t|=0\}.$$
\end{defn}

\begin{teo}
\label{T: exttime}
Consider $\procR$ and let $\tau_u(d)$ be the extinction time of the process.  Assume that
\begin{displaymath} 
\mathbb{E} \left [\frac{N}{N+d}\right]  < \frac{1}{d+1}. 
\end{displaymath}
Then
\[
\int_{0}^{1} \frac{1-s}{G_L(s) - s}ds \leq \mathbb{E}(\tau_u(d)) \leq \int_{0}^{1} \frac{1-s}{G_U(s) - s}ds
\]
where
\[
G_L(s) = \sum_{y=0}^{d}s^y \left[ \sum_{n=y}^{\infty} \left[
\mathbb{P}(N=n) \frac{\dbinom{d}{y}\dbinom{n}{y}}{\dbinom{n+d}{d}}
\right] \right]
\]
and
\[
G_U(s) = \mathbb{P}(N=0) + \sum_{y=1}^{d+1} \left[ \dbinom{d+1}
{y}s^y \sum_{n=y}^{\infty} \left[ \mathbb{P}(N=n) \frac{\dbinom{n-1}
{y-1}}{\dbinom{n+d}{d}} \right] \right].
\]

\end{teo}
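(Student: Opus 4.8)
The plan is to read each integral as the mean extinction time of a continuous-time Markov branching process (CTMBP) and to sandwich the true colony process between two such processes whose offspring generating functions are exactly $G_L$ and $G_U$. Because every colony carries an independent $\mathrm{Exp}(1)$ catastrophe clock, in branching language each colony is a particle of unit death rate that, upon dying, is replaced by a random number of new colonies; moreover $\tau_u(d)$ coincides with the time the colony count hits $0$, since an individual is alive exactly while its colony is alive. I would reuse the coupling already behind Theorem~\ref{T: MCCHPE2}. On the tree, the unique-parent structure shows that a child $w$ of a vertex $v$ can be colonised for the first time only by $v$'s single dispersal, and is empty at that moment; hence the colonies created only on children (never re-using the parent direction) form a genuine CTMBP, free of collisions and re-colonisations, with offspring generating function $G_L$, and this process is embedded in, hence dominated by, the real one. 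Dually, the process in which every catastrophe is credited with new colonies in all $d+1$ nearest-neighbour directions, ignoring occupancy, is a CTMBP with offspring generating function $G_U$ and dominates the real colony count. The origin, having degree $d$, disperses to all its neighbours, and its offspring law is again bracketed between those of $G_L$ and $G_U$. These two couplings give $T_{G_L}\preceq \tau_u(d)\preceq T_{G_U}$ in the stochastic order.

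For a generic subcritical CTMBP with unit death rate and offspring generating function $h$, started from one particle, let $q(t)=\mathbb{P}(Z_t=0)$ denote the probability of extinction by time $t$. The backward Kolmogorov equation is $q'(t)=h\big(q(t)\big)-q(t)$ with $q(0)=0$, and subcriticality $h'(1)<1$ forces $q(t)\uparrow 1$ together with $h(s)>s$ on $[0,1)$. Writing the extinction time $T$ as $\mathbb{E}(T)=\int_0^\infty\mathbb{P}(T>t)\,dt=\int_0^\infty\big(1-q(t)\big)\,dt$ and substituting $s=q(t)$, so that $ds=\big(h(s)-s\big)\,dt$ and the limits become $0$ and $1$, yields
\[
\mathbb{E}(T)=\int_0^1\frac{1-s}{h(s)-s}\,ds,
\]
which is finite because $h(s)-s\sim\big(1-h'(1)\big)(1-s)$ as $s\uparrow 1$, so the integrand tends to $1/\big(1-h'(1)\big)$. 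Specialising to $h=G_L$ and $h=G_U$ reproduces the two integrals in the statement.

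It remains to confirm that both bounding processes are subcritical under the hypothesis. A direct computation, or the mean-occupancy identity $\mathbb{E}\,[\,\#\{\text{nonempty boxes}\}\mid N=n\,]=(\text{number of boxes})\cdot n/(n+d)$ for uniform dispersion, gives $G_U'(1)=(d+1)\,\mathbb{E}\big[N/(N+d)\big]=\beta$ and $G_L'(1)=d\,\mathbb{E}\big[N/(N+d)\big]=\alpha$, with $\alpha,\beta$ as in Definition~\ref{parametros}. The assumption $\mathbb{E}[N/(N+d)]<1/(d+1)$ is precisely $\beta<1$, whence also $\alpha<1$, so $G_L$ and $G_U$ are subcritical and extinction is almost sure, in agreement with Theorem~\ref{C: MCC2H}. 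Taking expectations in $T_{G_L}\preceq\tau_u(d)\preceq T_{G_U}$ and inserting the formula above for $h=G_L$ and $h=G_U$ gives the claimed lower and upper bounds on $\mathbb{E}(\tau_u(d))$.

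The hard part will be the coupling, because the true process is not literally a branching process: an emptied vertex may be re-colonised from a neighbour and distinct colonies may send survivors to a common empty site, so collisions occur. The lower bound is comparatively clean, since the unique-parent property makes the children-only subprocess an honest CTMBP with generating function $G_L$; the delicate direction is the upper bound, where one must verify that every real dispersal produces at most as many new colonies as the $d+1$ idealised directions of $G_U$, so that collisions and re-colonisations can only decrease the count relative to the dominating process. Secondary technical points, the monotone change of variables through $t\mapsto q(t)$, the interchange giving $\mathbb{E}(T)=\int_0^\infty(1-q(t))\,dt$, and the absence of explosion, all follow from subcriticality.
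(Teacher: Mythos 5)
Your proposal is correct and follows essentially the same route as the paper: the paper likewise sandwiches $\procR$ between the \emph{Move Forward or Die} branching process on $\mathbb{T}^d_+$ (offspring pgf $G_L$) and the \emph{Self Avoiding} process on $\mathbb{T}^{d+1}_+$ (offspring pgf $G_U$) --- the latter being exactly your ``all $d+1$ directions, ignoring occupancy'' process, since on the enlarged tree every dispersal target is fresh --- and then applies the formula $\mathbb{E}(T)=\int_0^1 \frac{1-s}{h(s)-s}\,ds$ for subcritical unit-rate Markov branching processes (Propositions~\ref{ExtTimeU} and~\ref{ExtTimeL}). The only differences are cosmetic: you derive that integral formula from the backward Kolmogorov equation where the paper cites Narayan, and you verify subcriticality ($\beta<1$, hence $\alpha<1$) and the collision/re-colonisation issues explicitly, which the paper handles implicitly through its coupling paragraphs.
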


\section{Proofs}
\label{S: Proofs}

To prove our results we define auxiliary processes on the graphs $\mathbb{T}^d$ and $\mathbb{T}^d_+,$ whose understanding will provide
bounds for the processes defined in Section 2. 

In the first two auxiliary processes, denoted by $\up$ and $\upP$,
every time a catastrophe occurs in a colony,  according to uniform dispersion, the survival individuals disperse over neighboring vertices that are further from the origin than the vertex where that colony was placed. In other words, individuals do not disperse to sites that have already been colonized. We refer to this process as \textit{Self Avoiding}.

In the last two auxiliary processes, represented by $\low$ and $\lowP$, surviving individuals disperse to any of the $d+1$ neighboring vertices according to the uniform dispersion. However, those who move backward towards the origin die, as this direction is deemed inhospitable or infertile. We refer to this process as \textit{Move Forward or Die}.
In both processes, every new colony starts with only one individual.

The main idea behind the proofs
is the identification of an underlying branching process related to the models. After doing that we can apply results of the theory of Branching
Processes, including the less known and new results presented in~\cite{MRV2018}.

\subsection{The Self Avoiding Model}\label{S:Uunif}
\hfill

\begin{prop}
\label{P: CCSRSR2}
Consider $\up$ and let $V_d$ be the survival event. We have that $\mathbb{P}(V_d) > 0 $ if and only if
\begin{displaymath}
\mathbb{E} \left [ \frac{N}{N+d-1} \right]  > \frac{1}{d}.
\end{displaymath}
\end{prop}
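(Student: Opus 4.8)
The plan is to recognize $\up$ as a Galton--Watson branching process counting colonies, and then to apply the classical extinction criterion. The self-avoiding rule is exactly what makes this reduction clean: since survivors only move to vertices farther from the origin and the underlying graph is a tree, two distinct colonies can never attempt to colonize the same site, so no individual is ever killed by a collision with a foreign colony. Consequently, the set of colonies ever created, organized by parenthood, forms a tree in which each colony independently produces its offspring colonies according to a common law, and survival (there being always at least one live colony) is equivalent to this genealogical tree being infinite.

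Next I would compute the offspring law. Fix a non-origin colony; at its catastrophe it leaves $N$ survivors, and these disperse uniformly over its $d$ forward neighbors. By the definition of uniform dispersion, conditioned on $N=n$ the vector of arrivals is uniform over the weak compositions of $n$ into $d$ parts, so the number $Y$ of colonized forward neighbors (equivalently, the number of offspring colonies) satisfies
\[
\mathbb{P}(Y=y\mid N=n)=\frac{\binom{d}{y}\binom{n-1}{y-1}}{\binom{n+d-1}{n}},\qquad 1\le y\le \min\{n,d\}.
\]
The decisive computation is the mean. By symmetry each forward neighbor is empty with probability $\binom{n+d-2}{n}/\binom{n+d-1}{n}=(d-1)/(n+d-1)$, hence is colonized with probability $n/(n+d-1)$, giving $\mathbb{E}[Y\mid N=n]=dn/(n+d-1)$ and, after averaging over $N$,
\[
m:=\mathbb{E}[Y]=d\,\mathbb{E}\!\left[\frac{N}{N+d-1}\right].
\]

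To finish, I would invoke the standard dichotomy for Galton--Watson processes: the offspring law is non-degenerate (for instance $\mathbb{P}(N=0)>0$ forces $\mathbb{P}(Y=0)>0$), so the genealogical tree is infinite with positive probability if and only if $m>1$, that is, if and only if $\mathbb{E}[N/(N+d-1)]>1/d$. The origin is the only vertex whose reproduction differs (on $\bbT^d$ it has $d+1$ rather than $d$ forward neighbors), but this merely reseeds finitely many independent copies of the process in the first generation and does not alter the survival/extinction dichotomy; hence $\mathbb{P}(V_d)>0$ if and only if the displayed inequality holds.

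The main obstacle I anticipate is the reduction step rather than the algebra: one must argue carefully that the self-avoiding dynamics on the tree really yields \emph{independent and identically distributed} offspring across colonies, and that non-extinction of the abstract branching process coincides with the event $V_d=\{|\eta_t|>0 \text{ for all } t\ge 0\}$ in continuous time. Here the key observation is that offspring colonies are born exactly at the parent's catastrophe instant, so the count of live colonies never drops to zero while the genealogy persists. Once this equivalence is in place, the offspring-mean computation and the Galton--Watson criterion are routine.
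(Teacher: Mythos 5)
Your proof is correct and takes essentially the same route as the paper: both identify the embedded homogeneous branching process of colonies (using the self-avoiding, forward-only dynamics on the tree to get i.i.d.\ offspring), compute $\mathbb{P}(\text{a given forward neighbor is colonized}\mid N=n)=n/(n+d-1)$ from the uniform distribution over weak compositions, obtain $\mathbb{E}(Y)=d\,\mathbb{E}\left[\frac{N}{N+d-1}\right]$, and apply the Galton--Watson criterion $\mathbb{E}(Y)>1$. Your explicit treatment of the origin's degree-$(d+1)$ first generation and of non-degeneracy of the offspring law only spells out what the paper leaves implicit.
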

\textit{Proof of Proposition~\ref{P: CCSRSR2}}\\
Next, observe that the
process $\upP$ behaves as a homogeneous branching process. Every vertex $x\in\mathbb{T}^d$ which is colonized
produces $Y$ new colonies (whose distribution depends only on $\mathcal{N}$ and $\mathcal{L}$) on the $d$ neighbor vertices which are located further from the origin than $x$ is. By numbering those $d$ vertices, we can represent the random variable $Y$ as 
\begin{displaymath}
 Y = \sum_{i=1}^{d}\mathcal{I}_i
\end{displaymath}
where $\mathcal{I}_i$ is the indicator variable of event the vertex $i$ receives at least one individual.

Thus,
\begin{displaymath}
 \mathbb{E}(Y) = \sum_{i=1}^{d}\mathbb{P}(\mathcal{I}_i=1) = d \sum_{n\geq1} \mathbb{P}(\mathcal{I}_1=1|N=n)\mathbb{P}(N = n) .
\end{displaymath}
Now, note that
\begin{displaymath}
\mathbb{P}(\mathcal{I}_1 = 1 | N = n) = \frac{n}{n+d-1},
\end{displaymath}
and so
\begin{equation}\label{eq:mediadesc}
  \mathbb{E}(Y) =   d \sum_{n\geq1} \left [ \frac{n}{n+d-1} \mathbb{P}(N = n) \right]  = d \mathbb{E} \left ( \frac{N}{N+d-1} \right ).   
\end{equation}
From the theory of homogeneous branching processes we see that $\upP$ (and also $\up$) survives if and only if $ \mathbb{E}(Y)>1.$

\begin{prop} \label{P: CCSRSR2V}
Consider the process $\up$. Let $V_d$ be the survival event and $I_d$ the number of colonies created during the process. Then
\begin{displaymath}
\mathbb{P}(V_d) = \sum_{r=1}^{d+1}\left [(1 - \psi^r)\binom{d+1}
{r}\sum_{n=r}^{\infty}\frac{\binom{n-1}{r-1}}{\binom{n+d}{d}}\mathbb
{P}(N=n) \right]
\end{displaymath}
where $\psi$, the extiuncion probability for the process $\upP$, is the smallest non-negative solution of 
\begin{displaymath}
\sum_{y=1}^{d} \left[ s^y\binom{d}{y}\sum_{n=y}^{\infty}\frac{\binom
{n-1}{y-1}}{\binom{n+d-1}{d-1}}\mathbb{P}(N=n) \right] = s - \mathbb{P}
(N=0)
\end{displaymath}
On the subcritical regime, which means
\begin{displaymath}
\mathbb{E} \left [ \frac{N}{N+d-1} \right]  < \frac{1}{d}
\end{displaymath}
it holds that
\[
\mathbb{E}(I_d) = 1 + \frac{(d+1)\mathbb{E} \left ( \frac{N}{N+d} \right )}{1 - d\mathbb{E} \left (\frac{N}{N+d-1} \right )}.
\]

\end{prop}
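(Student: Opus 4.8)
The plan is to realise $\up$ as a Galton--Watson process that is homogeneous except at its root, and to reduce every assertion to facts about the homogeneous branching process underlying $\upP$ that was already identified in the proof of Proposition~\ref{P: CCSRSR2}. First I would write down the offspring generating function of $\upP$. A colony at any vertex of $\mathbb{T}^d_+$ disperses its $N$ survivors, under uniform dispersion, over its $d$ forward neighbours, so the number $Y$ of new colonies it spawns satisfies
\[
\mathbb{P}(Y=y\mid N=n)=\binom{d}{y}\frac{\binom{n-1}{y-1}}{\binom{n+d-1}{d-1}},\qquad 0\le y\le\min\{n,d\},
\]
with $Y=0$ precisely when $N=0$. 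Hence $g(s):=\mathbb{E}(s^{Y})=\mathbb{P}(N=0)+\sum_{y=1}^{d}s^y\binom{d}{y}\sum_{n\ge y}\frac{\binom{n-1}{y-1}}{\binom{n+d-1}{d-1}}\mathbb{P}(N=n)$. Since $\upP$ is a homogeneous Galton--Watson process with offspring law $Y$, its extinction probability $\psi$ is the smallest non-negative root of $g(s)=s$, which is exactly the displayed fixed-point equation after transposing $\mathbb{P}(N=0)$.

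Next I would treat the origin separately. In $\mathbb{T}^d$ the origin has $d+1$ forward neighbours, and deleting it splits the graph into $d+1$ subtrees, each isomorphic to $\mathbb{T}^d_+$. Thus the origin spawns $R$ colonies on its $d+1$ neighbours, and, conditionally on $R=r$ and on which neighbours are colonized, these $r$ colonies evolve as $r$ independent copies of $\upP$. The number $R$ follows the uniform-dispersion law on $d+1$ sites, giving $\mathbb{P}(R=r\mid N=n)=\binom{d+1}{r}\binom{n-1}{r-1}/\binom{n+d}{d}$. Because $\up$ survives if and only if at least one of these $r$ independent $\upP$-copies survives, and each survives with probability $1-\psi$, conditioning on $R=r\ge 1$ gives survival probability $1-\psi^{r}$ (the value $r=0$ contributes $1-\psi^{0}=0$, consistently). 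Summing over $r$ against the law of $R$ then yields the stated expression for $\mathbb{P}(V_d)$.

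Finally, for $\mathbb{E}(I_d)$ in the subcritical regime I would use the same decomposition. Writing $I_d=1+\sum_{i=1}^{R}J_i$, where the $J_i$ are i.i.d.\ copies of the total number $J$ of colonies produced by a single $\upP$-process and are independent of $R$, Wald's identity gives $\mathbb{E}(I_d)=1+\mathbb{E}(R)\,\mathbb{E}(J)$. The mean offspring at the origin is computed exactly as in \eqref{eq:mediadesc} but with $d$ replaced by $d+1$, namely $\mathbb{E}(R)=(d+1)\,\mathbb{P}(\mathcal{I}_1=1)=(d+1)\,\mathbb{E}\!\left(\frac{N}{N+d}\right)$. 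For $J$ I would invoke the standard total-progeny formula for a subcritical Galton--Watson process: when the offspring mean $m=\mathbb{E}(Y)=d\,\mathbb{E}\!\left(\frac{N}{N+d-1}\right)<1$ (which is the assumed subcritical condition), the expected total progeny of $\upP$ started from one colony is $\mathbb{E}(J)=1/(1-m)$. Combining these gives $\mathbb{E}(I_d)=1+\frac{(d+1)\,\mathbb{E}(N/(N+d))}{1-d\,\mathbb{E}(N/(N+d-1))}$.

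The main obstacle is the bookkeeping in the second step: one must keep straight that the origin's offspring law is built from $d+1$ sites while every later generation uses $d$ sites, and must justify that the $R$ colonized subtrees are genuinely independent copies of $\upP$. This last point follows from the tree structure together with the independence of the exponential catastrophe clocks and the dispersion outcomes across distinct colonies, so that once the ``inhomogeneous-at-the-root'' picture is set up, the classical extinction-probability and total-progeny identities for homogeneous Galton--Watson processes finish the proof.

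\qed
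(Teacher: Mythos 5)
Your proposal is correct and follows essentially the same route as the paper: a branching process that is homogeneous except at the root, with the root's offspring law $\mathbb{P}(Y_R=r\mid N=n)=\binom{d+1}{r}\binom{n-1}{r-1}/\binom{n+d}{d}$ on $d+1$ subtrees each running an independent copy of $\upP$, extinction probability $\psi$ as the smallest fixed point of the offspring generating function, and the subcritical total-progeny formula $1/(1-m)$ combined with conditioning on $Y_R$ (your Wald's-identity phrasing is just a repackaging of the paper's $\mathbb{E}(I_d\mid Y_R=r)=r\mu+1$ step, which cites Stirzaker). Your explicit derivation of the conditional law of $Y$ via counting weak compositions is a slightly more detailed justification than the paper gives, but the argument is the same.
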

\textit{Proof of Proposition~\ref{P: CCSRSR2V}}\\
When a colony placed at the origin collapses, $Y_R$ new colonies are formed by the survivors at its neighboring vertices. If a colony located outside the origin collapses, $Y$ new colonies are created on neighboring vertices.
Observe that the
process $\up$ behaves as a non-homogeneous branching process $\{Z_n\}_{n \geq 0}$ with
\[
Z_o = 1, Z_{n+l} = \sum_{i=1}^{Z_n}X_{n,i}, n \geq 0,
\]
where $X_{1,1}$ and $X_{n,i}, n >1$ are distributed as $Y_R$ and $Y$, respectively. So, we have that 
\begin{displaymath}
\mathbb{P}(V_d)  = \sum_{r=0}^{d+1} \mathbb{P}(V_d| Y_R = r) \mathbb{P}
(Y_R = r).
\end{displaymath}
Given that $Y_R = r$ one have $r$ independent $\upP$ processes living on $r$ independent rooted trees. So, we have that $\mathbb{P}(V_d^C | Y_R = r) = \psi^r, r =0,1,2,
\cdots, d+1$, where $\psi$ is the smallest non-negative solution of ${E}(s^Y)=s.$

By using the Total Probability Theorem,
\begin{align}\label{distY}
\mathbb{P}(Y_R=y) &=\sum_{n=r}^{\infty} \left[ \mathbb{P}(N=n) \frac
{\dbinom{d+1}{y}\dbinom{n-1}{y-1}}{\dbinom{n+d}{d}}\right] \textrm {
for } y =0,1,2, \cdots, d+1.\nonumber\\
\mathbb{P}(Y=y) &=\sum_{n=y}^{\infty} \left[ \mathbb{P}(N=n) \frac
{\dbinom{d}{y}\dbinom{n-1}{y-1}}{\dbinom{n+d-1}{d-1}}\right] \textrm {
for } y =0,1,2, \cdots, d.
\end{align}
The probability generating function of $Y$ is 
\begin{equation}\label{fgpY}
G_U(s)=\mathbb{E}(s^Y)  = \mathbb{P}(N=0) + \sum_{y=1}^{d} \left[ \dbinom{d}
{y}s^y \sum_{n=y}^{\infty} \left[ \mathbb{P}(N=n) \frac{\dbinom{n-1}
{y-1}}{\dbinom{n+d-1}{d-1}} \right] \right]
\end{equation}

Thus,
\begin{displaymath}
\mathbb{P}(V_d) = \sum_{r=1}^{d+1}\left [(1 - \psi^r)\binom{d+1}
{r}\sum_{n=r}^{\infty}\frac{\binom{n-1}{r-1}}{\binom{n+d}{d}}\mathbb
{P}(N=n) \right].
\end{displaymath}
As for the second part of the proposition, note that
\begin{displaymath}
\mathbb{E}(I_d)  = \sum_{r=0}^{d+1} \mathbb{E}(I_d| Y_R = r) \mathbb{P}
(Y_R = r)
\end{displaymath}
Besides, from (\ref{eq:mediadesc}) we have 
\begin{displaymath}
\mathbb{E}(I_d| Y_R = r) = r\mu + 1 \textrm{ where } \mu = \left [ 1 - d
\mathbb{E} \left ( \frac{N}{N+d-1} \right)   \right ]^{-1},
\end{displaymath}
see Stirzaker~\cite[Exercise 2b, p. 280]{Stirzaker}.

\begin{prop}
\label{P: UlimPS}
Consider the process $\up$.  Let $V_d$ be the survival event and $I_d$ the number of colonies created during the process. Then
\begin{equation}
\label{limPVD}
\lim_{d \to \infty} \mathbb{P}(V_d) = 1 - \nu
\end{equation}
where  $\nu$ is the smallest non-negative solution of $ \mathbb{E}(s^N) = s$. Besides that, if 
$\mathbb{E}(N) < 1$ (the subcritical case) then
\begin{equation}
\label{limEIDU}
\lim_{d \to \infty} \mathbb{E}(I_d) = \frac{1}{1 - \mathbb{E}(N)}.
\end{equation}
\end{prop}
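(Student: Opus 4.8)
The plan is to reduce both claims to the exact expressions furnished by Proposition~\ref{P: CCSRSR2V} and then to a single limit statement about generating functions. Write $g(s)=\mathbb{E}(s^N)$, let $\tilde h_d$ denote the probability generating function of $Y$ (the offspring of a non-origin colony, with the $\binom{n+d-1}{d-1}$ normalisation) and $h_d$ that of $Y_R$ (the offspring of the origin, over $d+1$ sites). Adding the vanishing $r=0$ term to the survival formula of Proposition~\ref{P: CCSRSR2V} gives the algebraic identity
\[
\mathbb{P}(V_d)=\sum_{r=1}^{d+1}(1-\psi_d^{\,r})\,\mathbb{P}(Y_R=r)=1-\mathbb{E}\!\left(\psi_d^{\,Y_R}\right)=1-h_d(\psi_d),
\]
where $\psi_d$ is the smallest non-negative fixed point of $\tilde h_d$. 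Hence \eqref{limPVD} is equivalent to $h_d(\psi_d)\to g(\nu)=\nu$.

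The heart of the argument is to show $\tilde h_d\to g$ and $h_d\to g$. Using the conditional law $\mathbb{P}(Y=y\mid N=n)=\binom{d}{y}\binom{n-1}{y-1}/\binom{n+d-1}{d-1}$ (and its $Y_R$ analogue), a polynomial estimate gives, for fixed $n$ and $y$, that this conditional probability tends to $\mathbf{1}\{y=n\}$ as $d\to\infty$; since it is bounded by $1$, dominated convergence in $n$ yields $\mathbb{P}(Y=y)\to\mathbb{P}(N=y)$, and a second dominated-convergence argument (bounding $s^y\mathbb{P}(Y=y)$ by $s^y$) gives $\tilde h_d(s)\to g(s)$ pointwise on $[0,1]$, and likewise $h_d(s)\to g(s)$; note no moment hypothesis is needed here. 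Because the $\tilde h_d$ and $h_d$ are nondecreasing and the limit $g$ is continuous on the compact interval $[0,1]$, P\'olya's theorem upgrades this to uniform convergence on $[0,1]$. From uniform convergence I would obtain $\psi_d\to\nu$ by the standard fixed-point comparison for convex increasing generating functions: bracketing $\nu$ between $s_-<\nu<s_+$ with $g(s_-)>s_-$ and $g(s_+)<s_+$ forces the same strict inequalities for $\tilde h_d$ eventually, pinning $\psi_d\in(s_-,s_+)$, with the boundary cases $\nu=0,1$ treated by one-sided bracketing and a subsequence/contradiction argument. Then
\[
|h_d(\psi_d)-\nu|\le \|h_d-g\|_{\infty,[0,1]}+|g(\psi_d)-g(\nu)|\to0
\]
by uniform convergence and continuity of $g$, which proves \eqref{limPVD}.

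For \eqref{limEIDU} the work is essentially done by the closed form $\mathbb{E}(I_d)=1+(d+1)\mathbb{E}(\tfrac{N}{N+d})\big/(1-d\,\mathbb{E}(\tfrac{N}{N+d-1}))$ of Proposition~\ref{P: CCSRSR2V}. Since $\tfrac{(d+1)N}{N+d}\to N$ and $\tfrac{dN}{N+d-1}\to N$ pointwise, both bounded by $2N$, which is integrable as $\mathbb{E}(N)<1<\infty$, dominated convergence gives $(d+1)\mathbb{E}(\tfrac{N}{N+d})\to\mathbb{E}(N)$ and $d\,\mathbb{E}(\tfrac{N}{N+d-1})\to\mathbb{E}(N)<1$; the latter also certifies that the subcritical-regime hypothesis of the formula holds for all large $d$. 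Hence $\mathbb{E}(I_d)\to 1+\mathbb{E}(N)/(1-\mathbb{E}(N))=1/(1-\mathbb{E}(N))$.

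The main obstacle is the double limit concealed in $h_d(\psi_d)$: as $d\to\infty$ one must pass to the limit simultaneously in the argument $\psi_d$, which is itself $d$-dependent, and in the function $h_d$. Pointwise convergence of $h_d$ alone does not suffice, so the crux is securing the uniform convergence on $[0,1]$ via P\'olya's theorem together with the continuity $\psi_d\to\nu$ of the extinction probability under convergence of offspring generating functions; the fixed-point comparison yielding $\psi_d\to\nu$, and in particular its degenerate cases $\nu\in\{0,1\}$, is the most delicate point.
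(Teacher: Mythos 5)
Your argument is correct, and it differs from the paper's in one substantive way: where you prove from scratch that convergence of the offspring laws forces convergence of the extinction probabilities, the paper simply checks that $Y \overset{\mathcal D}{\to} N$ and $Y_R \overset{\mathcal D}{\to} N$ and then invokes \cite[Proposition 4.2]{MRV2018} as a black box; your chain (pointwise convergence of the generating functions $\tilde h_d, h_d$ to $g$ via dominated convergence in $n$, upgraded to uniform convergence on $[0,1]$ by P\'olya's theorem for monotone functions, then $\psi_d\to\nu$ by the convex fixed-point bracketing, then $\mathbb{P}(V_d)=1-h_d(\psi_d)\to 1-g(\nu)=1-\nu$) is essentially a proof of that cited proposition, and your identification of the double limit in $h_d(\psi_d)$ as the crux is exactly right. (A small simplification available to you: in this model $\nu=0$ forces $\mathbb{P}(N=0)=0$, hence $\mathbb{P}(Y=0)=0$ and $\psi_d=0$ identically, so that degenerate case is automatic.) For \eqref{limEIDU} the two routes are computationally equivalent but organized differently: the paper decomposes $\mathbb{E}(I_d)=\sum_{r}\mathbb{E}(I_d\mid Y_R=r)\,\mathbb{P}(Y_R=r)$ with $\mathbb{E}(I_d\mid Y_R=r)=r\mu_d+1$ and passes to the limit term by term via a generalized dominated convergence theorem (needed because the summands and the law of $Y_R$ both vary with $d$), whereas you apply ordinary dominated convergence directly to the closed form $\mathbb{E}(I_d)=1+(d+1)\mathbb{E}\bigl(\tfrac{N}{N+d}\bigr)\big/\bigl(1-d\,\mathbb{E}\bigl(\tfrac{N}{N+d-1}\bigr)\bigr)$ with the dominating function $2N$, which also verifies, as you note, that the subcriticality hypothesis of Proposition~\ref{P: CCSRSR2V} holds for all large $d$. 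What each buys: the paper's route is shorter and leans on prior work; yours is self-contained, needs no moment hypothesis for \eqref{limPVD}, and sidesteps the varying-measure dominated-convergence subtlety in the paper's treatment of the mean.
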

\begin{proof}[Proof of Proposition~\ref{P: UlimPS}]
In order to prove~(\ref{limPVD}) one has to apply \cite[Proposition 4.2]{MRV2018}, observing that $Y \overset{\mathcal D}{\to} N$ and $Y_{R} \overset{\mathcal D}{\to} N$, when $d\to\infty.$
Moreover to prove~(\ref{limEIDU}) observe that
\[
\lim_{d \to \infty} \mathbb{E}(I_d) = \lim_{d \to \infty} \sum_{r=0}^{d+1} \mathbb{E}(I_d|Y_{R}=r)\bbP(Y_{R}=r).\]
As  $Y \overset{\mathcal D}{\to} N$ and $Y_{R} \overset{\mathcal D}{\to} N $ when $d\to\infty$ then
\[ \lim_{d \to \infty}  \mathbb{E}(I_d|Y_{R}=r) = \lim_{d \to \infty} r\frac{1}{1-\mathbb{E}(Y)} +1 = \frac{r}{1-\mathbb{E}(N)}+1 \]
and the result follows from the Dominated Convergence Theorem~\cite[Theorem 9.1 p. 26]{Thorisson}.
\end{proof}

\begin{prop} \label{P: LIMTSRSR2}
Let $M_d$ be the reach of the process $\upP$, that is, the distance from the origin to the farthest vertex where a colony is formed. 
Assuming
\begin{displaymath}
\mathbb{E} \left [ \frac{N}{N+d-1} \right]  < \frac{1}{d}
\end{displaymath}
Then
\[
\frac{\left [1+D(1- \mu_u \right](1- B_u^{n+1})}{1+D(1-\mu_u - \mu_u^{n+1})} \leq \mathbf{P}(M_d \leq n) \leq \frac{\left[1+ \frac{\mu_u(1- \mu_u)}{B_u} \right](1- \mu_u^{n+1})}{1+ \frac{\mu_u(1- \mu_u)}{B_u} - \mu_u^{n+1}}
\]

where
\begin{align*}
\mu_u &=  d \mathbb{E} \left [\frac{N}{N+d-1}\right] \\
D &= \max \left \{ 2; \frac{\mu_u}{\mu_u - \mathbb{P}(N \neq 0)} \right\} \\
B_u& =d(d-1)\left[\mathbb{E} \left(\frac{N(N-1)}{(N+d-1)(N+d-2)} \right) \right]
\end{align*}
Moreover,
\begin{displaymath}
M_d \overset{\mathcal D}{\to} M,
\end{displaymath}
where $\mathbb{P}( M \leq m) = g_{m+1}(0)$, being $g(s) = \mathbb{E}(s^N) $ and $g_{m+1}(s) = \overset{ m+1 \textrm { times }} {g(g(\cdots g(s)) \cdots )}$.
\end{prop}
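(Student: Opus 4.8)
\emph{Proof proposal for Proposition~\ref{P: LIMTSRSR2}.}

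The plan is to read the reach $M_d$ as the height of the homogeneous Galton--Watson process already identified in Proposition~\ref{P: CCSRSR2}. Recall that $\upP$ is a branching process $\{Z_k\}_{k\ge 0}$ with $Z_0=1$ and offspring law that of $Y=\sum_{i=1}^{d}\mathcal I_i$; the colonies at graph-distance $k$ from the origin are exactly the members of generation $k$. Hence $\{M_d\le n\}=\{Z_{n+1}=0\}$, and writing $\varphi(s):=\mathbb E(s^Y)=G_U(s)$ (cf.~\eqref{fgpY}) and letting $\varphi_k$ be the $k$-fold iterate of $\varphi$, one gets $\mathbb P(M_d\le n)=\mathbb P(Z_{n+1}=0)=\varphi_{n+1}(0)$. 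The standing hypothesis $\mathbb E[N/(N+d-1)]<1/d$ says $\mathbb E(Y)=\mu_u<1$, so the process is subcritical, extinction is almost sure, and $M_d$ is a.s.\ finite.

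First I would record the three moment quantities that drive the bounds. From \eqref{eq:mediadesc}, $\mathbb E(Y)=\mu_u=d\,\mathbb E[N/(N+d-1)]$. Since a single survivor already colonizes a forward neighbor, $\{Y=0\}=\{N=0\}$, whence $\mathbb P(Y\neq0)=\mathbb P(N\neq0)$, the quantity appearing in $D$. For the second factorial moment I would write $Y(Y-1)=\sum_{i\neq j}\mathcal I_i\mathcal I_j$ and compute, from the stars-and-bars occupancy of $n$ survivors over $d$ forward sites, the joint colonization probability $\mathbb P(\mathcal I_1=1,\mathcal I_2=1\mid N=n)=\frac{n(n-1)}{(n+d-1)(n+d-2)}$ by inclusion--exclusion; summing gives $\mathbb E[Y(Y-1)]=d(d-1)\,\mathbb E\big[\frac{N(N-1)}{(N+d-1)(N+d-2)}\big]=B_u$.

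With $\mu_u$, $B_u$ and $\mathbb P(N\neq0)$ in hand, the two-sided estimate for $\varphi_{n+1}(0)$ is precisely the branching-process bound used to prove Theorem~\ref{T: alcance}, now specialized to the homogeneous process $\upP$; it is the linear-fractional comparison of \cite{MRV2018}. The idea is to sandwich $\varphi$ on $[0,1]$ between two M\"obius generating functions sharing the slope $\mu_u$ at $s=1$: the function producing the lower bound is pinned by the value $\varphi(0)=\mathbb P(N=0)$ (which is the origin of $D=\max\{2,\mu_u/(\mu_u-\mathbb P(N\neq0))\}$), while the function producing the upper bound is calibrated by the curvature $B_u$. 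Because a M\"obius map has an explicit orbit --- linearizing $q_k:=1-\varphi_k(0)$ through $q\mapsto 1/q$ turns the recursion $q_{k+1}=1-\varphi(1-q_k)$ into an affine one --- each comparison function gives a closed form of the type $\frac{(1+C)(1-\mu_u^{n+1})}{1+C-\mu_u^{n+1}}$, with $C$ read off from the comparison parameter ($C=D(1-\mu_u)$ below, $C=\mu_u(1-\mu_u)/B_u$ above). Monotonicity of generating functions transfers the pointwise ordering of $\varphi$ and its envelopes to the ordering of their iterates at $0$, yielding the stated inequalities for $\mathbb P(M_d\le n)$.

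For the convergence $M_d\overset{\mathcal D}{\to}M$ I would not let $d\to\infty$ inside the bounds, as those only tend to linear-fractional envelopes; instead I would use $Y\overset{\mathcal D}{\to}N$, already noted in the proof of Proposition~\ref{P: UlimPS}. This gives $\varphi\to g$ pointwise on $[0,1]$, where $g(s)=\mathbb E(s^N)$, hence uniformly by monotonicity, and uniform convergence survives the finitely many compositions defining $\varphi_{n+1}$; therefore $\mathbb P(M_d\le n)=\varphi_{n+1}(0)\to g_{n+1}(0)=\mathbb P(M\le n)$ for each $n$. The hard part is the global validity of the sandwich in the third paragraph: matching slope, endpoint value and curvature only forces the ordering near $s=0$ and $s=1$, so proving that the envelopes dominate (resp.\ are dominated by) $\varphi$ on the entire interval requires the convexity/monotonicity argument of \cite{MRV2018} rather than a local Taylor estimate; the remaining steps are routine bookkeeping with the moments of $Y$.
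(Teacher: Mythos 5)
Your proposal is correct and follows essentially the same route as the paper: the same indicator decomposition $Y=\sum_{i=1}^{d}\mathcal{I}_i$ with the stars-and-bars computations giving $\mu_u=\mathbb{E}(Y)$ and $B_u=\mathbb{E}[Y(Y-1)]=d(d-1)\,\mathbb{E}\left[\frac{N(N-1)}{(N+d-1)(N+d-2)}\right]$, followed by the two-sided linear-fractional bound on the generation-extinction probabilities of the subcritical branching process, which the paper simply cites as \cite[Theorem 1, p.~331]{AA} where you sketch its M\"obius-envelope proof. Your direct iterate-convergence argument for $M_d\overset{\mathcal D}{\to}M$ from $Y\overset{\mathcal D}{\to}N$ is precisely the content of \cite[Proposition 4.1]{MRV2018}, which is what the paper invokes, so the two proofs coincide in substance.
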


\textit{Proof of Proposition~\ref{P: LIMTSRSR2}}\\
Every vertex $x\in\mathbb{T}^d$ which is colonized
produces $Y$ new colonies (whose distribution depends only on $\mathcal{N}$ and $\mathcal{L}$) on the $d$ neighbor vertices which located are further from the origin than $x$ is. By numbering those $d$ vertices, we can represent the random variable $Y$ as 
\begin{displaymath}
 Y = \sum_{i=1}^{d}\mathcal{I}_i
\end{displaymath}
where $\mathcal{I}_i$ is the indicator variable of event the vertex $i$ receives at least one individual.

From (\ref{eq:mediadesc}) we obtained $\mathbb{E}(Y).$ Now

\begin{displaymath}
 Y^2 = \left (\sum_{i=1}^{d}\mathcal{I}_i \right )^2 = \sum_{i=1}^{d}\mathcal{I}_i^2 + 2\sum_{1 \leq i < j \leq d}\mathcal{I}_i\mathcal{I}_j
\end{displaymath}
\begin{displaymath}
 \mathbb{E} \left(Y^2 \right ) =  d \mathbb{E} \left (\mathcal{I}_1^2 \right ) + d(d-1) \mathbb{E}(\mathcal{I}_1\mathcal{I}_2)
\end{displaymath}
On the other hand
\begin{align*}
 \mathbb{E} \left (\mathcal{I}_1^2 \right ) & = \mathbb{P}(\mathcal{I}_1 = 1) = \sum_n \left [\mathbb{P}(\mathcal{I}_1=1|N=n)\mathbb{P}(N = n) \right] = \sum_{n} \left (\frac{n}{n+d-1} \right ) \mathbb{P}(N = n) \\ & = \mathbb{E} \left (\frac{N}{N+d-1} \right )
\end{align*}
and
\begin{align*}
 \mathbb{E} \left (\mathcal{I}_1 \mathcal{I}_2 \right ) & = \mathbb{P}(\mathcal{I}_1 = 1; \mathcal{I}_2 = 1) = \sum_n \left [\mathbb{P}(\mathcal{I}_1=1; \mathcal{I}_2 = 1|N=n)\mathbb{P}(N = n) \right]
\end{align*}
where
\begin{align*}
\mathbb{P}(\mathcal{I}_1=1; \mathcal{I}_2 = 1|N=n) = \frac{n(n-1)}{(n+d-1)(n+d-2)}
\end{align*}
Then
\begin{displaymath}
 \mathbb{E} \left(Y^2 \right ) = d\mathbb{E} \left (\frac{N}{N+d-1} \right ) +d(d-1)\mathbb{E} \left (\frac{N(N-1)}{(N+d-1)(N+d-2)} \right )
\end{displaymath}
\begin{align*}
\mathbb{E} \left(Y(Y-1) \right ) = d(d-1)\mathbb{E} \left (\frac{N(N-1)}{(N+d-1)(N+d-2)} \right )
\end{align*}
Then the result follows from  ~\cite[Theorem 1 p. 331]{AA} with $\mu_u=\mathbb{E}(Y)$ and $B_u=\mathbb{E} \left(Y(Y-1) \right ).$

The convergence $M_d \overset{\mathcal D}{\to} M$ follows from the fact that $Y \overset{\mathcal D}{\to} N$ when $ d \to \infty$ and from \cite[Proposition 4.1]{MRV2018}.

\begin{prop}\label{ExtTimeU}
   Let $\tau_u(d)$ be the extinction time of the process $\upP$. If 
   \begin{displaymath}
\mathbb{E} \left [ \frac{N}{N+d-1} \right]  < \frac{1}{d}
\end{displaymath} 
then 
\[\mathbb{E}[\tau_u(d)]=\displaystyle\int_0^1\frac{1-y}{G_U(y)-y}dy.\] where $G_U(s)$ is given by (\ref{fgpY}).
\end{prop}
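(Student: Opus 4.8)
The plan is to identify $\upP$ with a homogeneous continuous-time Markov branching process and then reduce the computation of $\mathbb{E}[\tau_u(d)]$ to the stated integral through the backward Kolmogorov equation. Since on $\mathbb{T}^d_+$ the origin has degree $d$, which matches the number of forward neighbours of every other vertex, Proposition~\ref{P: CCSRSR2} already tells us that \emph{every} colony—including the one started at the origin—produces an independent copy of $Y$ offspring colonies, with common probability generating function $G_U(s)=\mathbb{E}(s^Y)$ from~(\ref{fgpY}). Each colony carries an independent $\mathrm{Exp}(1)$ lifetime until its catastrophe, independent of its offspring count, so the number $\{Z_t\}_{t\ge 0}$ of living colonies is a homogeneous Markov branching process with unit splitting rate and offspring pgf $G_U$. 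By~(\ref{eq:mediadesc}) the hypothesis $\mathbb{E}[N/(N+d-1)]<1/d$ is exactly $\mathbb{E}(Y)=G_U'(1)<1$, i.e. the process is subcritical.

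First I would set $q(t):=\mathbb{P}(Z_t=0)=F(t,0)$, where $F(t,s)=\mathbb{E}[s^{Z_t}\mid Z_0=1]$. For a Markov branching process with unit rate the backward equation is $\partial_t F(t,s)=G_U(F(t,s))-F(t,s)$ with $F(0,s)=s$; evaluating at $s=0$ gives the scalar ODE
\[
q'(t)=G_U(q(t))-q(t),\qquad q(0)=0.
\]
By convexity of $G_U$ together with $G_U(1)=1$ and $G_U'(1)<1$, one has $G_U(y)>y$ for every $y\in[0,1)$, so $q$ is strictly increasing; almost sure extinction in the subcritical regime forces $q(t)\uparrow 1$ as $t\to\infty$. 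Hence $t\mapsto q(t)$ is a strictly increasing bijection of $[0,\infty)$ onto $[0,1)$.

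Next I would compute $\mathbb{E}[\tau_u(d)]$ through the tail formula. Because $0$ is absorbing, $\{\tau_u(d)\le t\}=\{Z_t=0\}$, whence
\[
\mathbb{E}[\tau_u(d)]=\int_0^\infty \mathbb{P}(\tau_u(d)>t)\,dt=\int_0^\infty \bigl(1-q(t)\bigr)\,dt.
\]
The change of variables $y=q(t)$, for which the ODE gives $dy=(G_U(y)-y)\,dt$, together with $q(0)=0$ and $q(\infty)=1$, yields
\[
\mathbb{E}[\tau_u(d)]=\int_0^1 \frac{1-y}{G_U(y)-y}\,dy,
\]
which is the claimed identity.

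The hard part is not the algebra but justifying that this integral is finite, and this is precisely where subcriticality is used. Near the upper endpoint a Taylor expansion gives $G_U(y)-y=(G_U'(1)-1)(y-1)+o(1-y)=(1-\mathbb{E}(Y))(1-y)+o(1-y)$, so the integrand tends to the finite limit $1/(1-\mathbb{E}(Y))$ as $y\uparrow 1$; the denominator vanishes no faster than the numerator exactly because $\mathbb{E}(Y)<1$. Away from $1$ the denominator is bounded below by a positive constant, so the integrand is bounded and the integral converges. (At criticality $G_U'(1)=1$ the integrand would behave like $1/(1-y)$ and the mean extinction time would be infinite, consistent with the strict inequality assumed in the hypothesis.) A secondary technical point is to confirm the bijectivity and smoothness of $q$ so that the substitution is valid, which follows directly from the ODE and the monotone convergence $q(t)\uparrow 1$ established above.
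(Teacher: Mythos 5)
Your proposal is correct, and its skeleton coincides with the paper's: you identify the colony count of $\upP$ with a continuous-time branching process in which each colony lives an $\mathrm{Exp}(1)$ time and is replaced at death by $Y\leq d$ colonies with probability generating function $G_U$ from (\ref{fgpY}), and you translate the hypothesis into subcriticality, $G_U'(1)=\mathbb{E}(Y)=d\,\mathbb{E}[N/(N+d-1)]<1$, via (\ref{eq:mediadesc}) --- exactly the two steps the paper takes. Where you genuinely diverge is in how the extinction-time identity is obtained: the paper stops at the identification and quotes Narayan's result for Markov branching processes, whereas you re-derive that result from scratch --- the backward equation $q'(t)=G_U(q(t))-q(t)$ with $q(0)=0$, the convexity argument giving $G_U(y)>y$ on $[0,1)$ and $q(t)\uparrow 1$, the tail formula $\mathbb{E}[\tau_u(d)]=\int_0^\infty(1-q(t))\,dt$, and the substitution $y=q(t)$. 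In effect you have inlined the cited lemma; what this buys is a self-contained proof that makes explicit where the hypothesis $\mathbb{E}[N/(N+d-1)]<1/d$ is actually used, namely in the finiteness of $\int_0^1 (1-y)/(G_U(y)-y)\,dy$, where the integrand tends to $1/(1-\mathbb{E}(Y))$ as $y\uparrow 1$; the paper leaves that point inside the reference. One caveat you inherit from the paper rather than introduce yourself: treating $\{Z_t\}$ as a \emph{Markov} branching process (so that the scalar ODE for $q$ is valid) tacitly assumes the offspring number $Y$ is independent of the colony's lifetime $T$; in the model, $Y$ is produced from $N$, which is a thinning of the colony size $N_T$ at the catastrophe time, so $Y$ and $T$ are in fact dependent and $Z_t$ is, strictly speaking, an age-dependent (Sevastyanov) process. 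Since the paper's appeal to Narayan rests on the identical identification, your argument is faithful to the paper's proof on this point, but your sentence asserting independence of lifetime and offspring count is the one place where the write-up overstates what the model gives you.
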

\begin{proof}
Let $Z_t$ be the number of colonies at time $t$ in the model $\upP$. Observe that $Z_t$ is a continuous-time branching process with  $Z_0=1$. Each particle (colony) in $Z_t$ survives an exponential time of rate 1 and right before death produces $Y\leq d$ particles (colonies are created right after a catastrophe) with distribution given by (\ref{distY}) and probability generating function given by (\ref{fgpY}). 
If $G'_U(1)\leq1$, then $\mathbb{P}[\tau_u(d)<\infty]=1$ and the result following from  Narayan~\cite{PN1982}.

\end{proof}

\subsection{Move Forward or Die Model}
\hfill

\begin{prop}
\label{P: CCSRCR2}
Consider $\low$ and let $V_d$ be the survival event. We have that $\mathbb{P}(V_d) > 0 $ if and only if
\begin{displaymath}
\mathbb{E} \left [ \frac{N}{N+d} \right]  > \frac{1}{d}.
\end{displaymath}
\end{prop}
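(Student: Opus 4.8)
The plan is to recognise $\low$ as a homogeneous branching process and then apply the standard extinction criterion, in complete parallel with the proof of Proposition~\ref{P: CCSRSR2}. Indeed, the only structural difference between the two models is the number of sites over which the survivors are uniformly dispersed, so the argument will differ only in the computation of the offspring mean.

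First I would observe that, just as in the Self Avoiding case, the process $\lowP$ (and hence also $\low$) behaves as a homogeneous branching process: since $\mathbb{T}^d$ is a tree, every forward neighbour of a colonised vertex has a unique parent, so distinct colonies never contend for the same site, and the numbers of survivors at successive catastrophes are i.i.d. Thus each colonised non-origin vertex independently produces $Y$ new colonies on its $d$ forward neighbours, where the law of $Y$ depends only on $\mathcal{N}$ and $\mathcal{L}$. Writing $Y = \sum_{i=1}^{d} \mathcal{I}_i$, with $\mathcal{I}_i$ the indicator that forward vertex $i$ receives at least one individual, linearity gives $\mathbb{E}(Y) = d\,\mathbb{P}(\mathcal{I}_1 = 1)$.

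The crux of the proof --- and the single point where the Move Forward or Die model parts ways with the Self Avoiding model --- is the conditional probability $\mathbb{P}(\mathcal{I}_1 = 1 \mid N = n)$. Here the uniform dispersion is carried out over all $d+1$ neighbours (the $d$ forward sites together with the one backward site whose occupants die), rather than over the $d$ forward sites alone. Conditionally on $N=n$, every occupancy vector $(r_1,\dots,r_{d+1})$ with $r_1+\cdots+r_{d+1}=n$ is equiprobable, so
\begin{displaymath}
\mathbb{P}(\mathcal{I}_1 = 0 \mid N = n) = \frac{\binom{n+d-1}{n}}{\binom{n+d}{n}} = \frac{d}{n+d},
\end{displaymath}
and therefore $\mathbb{P}(\mathcal{I}_1 = 1 \mid N = n) = \frac{n}{n+d}$. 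Averaging over $N$ yields $\mathbb{E}(Y) = d\,\mathbb{E}\!\left[\frac{N}{N+d}\right]$, which should be compared with the value $d\,\mathbb{E}[N/(N+d-1)]$ obtained in Proposition~\ref{P: CCSRSR2}: dispersing over one extra site simply shifts the denominator from $N+d-1$ to $N+d$.

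Finally I would invoke the classical dichotomy for homogeneous branching processes, by which the process survives with positive probability if and only if its offspring mean exceeds $1$. Substituting $\mathbb{E}(Y) = d\,\mathbb{E}[N/(N+d)]$, the condition $\mathbb{E}(Y) > 1$ is exactly $\mathbb{E}[N/(N+d)] > 1/d$, which is the claim; as before, the degree of the origin only affects the number of initial colonies and not the survival threshold, so the criterion is identical for $\low$ and $\lowP$. I expect no serious obstacle here: the reasoning is entirely routine save for the occupancy count above, which itself collapses to the elementary identity $\binom{n+d-1}{n}/\binom{n+d}{n} = d/(n+d)$.
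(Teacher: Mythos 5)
Your proposal is correct and takes essentially the same route as the paper: identify $\lowP$ as a homogeneous branching process, write $Y=\sum_{i=1}^{d}\mathcal{I}_i$, compute $\mathbb{P}(\mathcal{I}_1=1\mid N=n)=\frac{n}{n+d}$ so that $\mathbb{E}(Y)=d\,\mathbb{E}\left[\frac{N}{N+d}\right]$, and conclude by the classical mean-offspring criterion. Your only additions are details the paper leaves implicit, namely the weak-composition count $\binom{n+d-1}{n}/\binom{n+d}{n}=\frac{d}{n+d}$ behind the conditional probability, and the explicit remark that the origin's degree affects only the initial generation and hence not the survival threshold.
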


\textit{Proof of Proposition~\ref{P: CCSRCR2}}\\

Firstly, note that for fixed $\mathcal{N}$ and $\mathcal{L}$, both processes $\low$ and $\lowP$ either both have positive probability of survival or neither do

Next, observe that the
process $\lowP$ behaves as a homogeneous branching process. Every vertex $x\in\mathbb{T}^d$ which is colonized
produces $Y$ new colonies (whose distribution depends only on $\mathcal{N}$ and $\mathcal{L}$) on the $d$ neighbor vertices which are located further from the origin than $x$ is. By numbering those $d$ vertices, we can represent the random variable $Y$ as 
\begin{displaymath}
 Y = \sum_{i=1}^{d}\mathcal{I}_i
\end{displaymath}
where $\mathcal{I}_i$ is the indicator variable of event the vertex $i$ receives at least one individual.
Thus,
\begin{displaymath}
 \mathbb{E}(Y) = \sum_{i=1}^{d}\mathbb{P}(\mathcal{I}_i=1) = d \sum_{n\geq1} \mathbb{P}(\mathcal{I}_1=1|N=n)\mathbb{P}(N = n) .
\end{displaymath}
Now, note that
\begin{displaymath}
\mathbb{P}(\mathcal{I}_1 = 1 | N = n) = \frac{n}{n+d},
\end{displaymath}
and so
\begin{equation}\label{eq:mediadescL}
  \mathbb{E}(Y) =   d \sum_{n\geq1} \left [ \frac{n}{n+d} \mathbb{P}(N = n) \right]  = d \mathbb{E} \left ( \frac{N}{N+d} \right ).   
\end{equation}
From the theory of homogeneous branching processes we see that $\lowP$ (and also $\low$) survives if and only if $ \mathbb{E}(Y)>1.$

\begin{prop} \label{P: CCSRCR2V}
Consider the process $\low$.  Let $V_d$ be the survival event and $I_d$ the number of colonies created during the process. Then 
\begin{displaymath}
\mathbb{P}(V_d) = \sum_{r=1}^{d+1}\left [(1 - \rho^r)\binom{d+1}
{r}\sum_{n=r}^{\infty}\frac{\binom{n-1}{r-1}}{\binom{n+d}{d}}\mathbb
{P}(N=n) \right]
\end{displaymath}
where $\rho$, the extinction probability for the process $\lowP$, is the smallest non-negative solution of
\begin{displaymath}
\sum_{y=0}^{d} \left[ s^y\binom{d}{y}\sum_{n=y}^{\infty}\frac{\binom
{n}{y}}{\binom{n+d}{d}}\mathbb{P}(N=n) \right] = s.
\end{displaymath}
On the subcritical regime, which means
\begin{displaymath}
\mathbb{E} \left [ \frac{N}{N+d} \right]  < \frac{1}{d}
\end{displaymath}
it holds that

\[
\mathbb{E}(I_d)  = \frac{1 + \mathbb{E}\left ( \frac{N}{N+d} \right) }{1 - d\mathbb{E}\left ( \frac{N}{N+d} \right)}.
\]
\end{prop}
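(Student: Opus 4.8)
The plan is to mirror the argument for Proposition~\ref{P: CCSRSR2V}, identifying $\low$ with a branching process in which the colony at the origin behaves differently from every later colony, and then reading off the three claimed formulas from that structure. The one genuinely new ingredient is the offspring law $Y$ for the \emph{Move Forward or Die} dynamics, which must account for the backward slot into which survivors may fall and perish.

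First I would set up the branching description exactly as in the Self Avoiding case: a colony at the origin produces $Y_R$ new colonies and any colony away from the origin produces $Y$ new colonies, with $Y_R$ and $Y$ depending only on $\mathcal{N}$ and $\mathcal{L}$. For $\lowP$ this is a homogeneous branching process with offspring $Y$, while $\low$ is non-homogeneous only at the root, so that (as in Proposition~\ref{P: CCSRSR2V}) $\low$ is the process $\{Z_n\}$ with $Z_0 = 1$ whose first generation is distributed as $Y_R$ and whose later generations are distributed as $Y$.

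The heart of the argument is computing the law of $Y$. Conditionally on $N = n$ survivors, uniform dispersion spreads the $n$ survivors over the $d+1$ incident slots (the $d$ forward vertices together with the single backward vertex), each occupancy vector $r_1 + \cdots + r_{d+1} = n$ being equally likely, so the number of configurations is $\binom{n+d}{d}$. A forward colony is created in direction $i$ precisely when $r_i \geq 1$, while individuals landing in the backward slot die; thus $Y = \sum_{i=1}^d \mathcal{I}_i$ counts the nonempty forward slots. To have exactly $y$ of them nonempty I would choose the occupied forward slots ($\binom{d}{y}$ ways), force those coordinates to be $\geq 1$, set the remaining $d-y$ forward coordinates to $0$, and leave the backward coordinate free; a stars-and-bars shift of the $y$ occupied forward coordinates together with the free backward coordinate gives $\binom{n}{y}$ solutions. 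This yields $\mathbb{P}(Y = y \mid N = n) = \binom{d}{y}\binom{n}{y}/\binom{n+d}{d}$, hence the stated probability generating function and marginal law of $Y$. The same count at the root, where all $d+1$ slots are forward and nobody is wasted, reproduces $\mathbb{P}(Y_R = y \mid N = n) = \binom{d+1}{y}\binom{n-1}{y-1}/\binom{n+d}{d}$, which is identical to the Self Avoiding root law because the origin disperses over all its neighbors in both models.

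With the offspring laws in hand, the three conclusions follow as before. For survival I would condition on $Y_R = r$, observe that $\{Y_R = r\}$ leaves $r$ independent copies of $\lowP$ on disjoint subtrees, so $\mathbb{P}(V_d^c \mid Y_R = r) = \rho^r$ with $\rho$ the extinction probability of the homogeneous process $\lowP$, that is, the smallest non-negative root of $\mathbb{E}(s^Y) = s$; summing $(1 - \rho^r)\,\mathbb{P}(Y_R = r)$ over $r$ (the $r=0$ term dropping out) gives the survival formula. For the colony count I would again condition on $Y_R = r$ and use, exactly as in Proposition~\ref{P: CCSRSR2V} via Stirzaker~\cite[Exercise 2b, p.~280]{Stirzaker}, that a subcritical branching process with offspring mean $\mathbb{E}(Y) = d\,\mathbb{E}(N/(N+d))$ from (\ref{eq:mediadescL}) has expected total progeny $[1 - d\,\mathbb{E}(N/(N+d))]^{-1}$, so that $\mathbb{E}(I_d \mid Y_R = r) = 1 + r\,[1 - d\,\mathbb{E}(N/(N+d))]^{-1}$. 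Taking expectations and substituting $\mathbb{E}(Y_R) = (d+1)\,\mathbb{E}(N/(N+d))$ collapses $1 + \mathbb{E}(Y_R)\,[1 - d\,\mathbb{E}(N/(N+d))]^{-1}$ to the stated ratio. The main obstacle is not the algebra but getting the occupancy count for $Y$ right: one must remember that the dead backward slot still takes part in the uniform distribution, contributing the factor $\binom{n+d}{d}$ and the free backward coordinate, which is precisely what turns the Self Avoiding denominator $n+d-1$ into $n+d$ here.
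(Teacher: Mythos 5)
Your proposal is correct and follows the paper's own route: the same non-homogeneous branching decomposition with root offspring $Y_R$ and bulk offspring $Y$, conditioning on $Y_R=r$ to get $\rho^r$ for extinction, and Stirzaker's total-progeny formula $\mu=[1-d\,\mathbb{E}(N/(N+d))]^{-1}$ for $\mathbb{E}(I_d\mid Y_R=r)=1+r\mu$, which collapses to the stated ratio via $\mathbb{E}(Y_R)=(d+1)\,\mathbb{E}(N/(N+d))$. Your only addition is the explicit stars-and-bars derivation of $\mathbb{P}(Y=y\mid N=n)=\binom{d}{y}\binom{n}{y}/\binom{n+d}{d}$ (with the free backward coordinate accounting for the shift from $\binom{n-1}{y-1}$ to $\binom{n}{y}$), which the paper states without proof, so you have if anything supplied slightly more detail than the original.
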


\textit{Proof of Proposition~\ref{P: CCSRCR2V}}\\
When a colony placed at the origin collapses, $Y_R$ new colonies are formed by the survivors at its neighboring vertices. If a colony located outside the origin collapses, $Y$ new colonies are created on neighboring vertices.
Observe that the
process $\low$ behaves as a non-homogeneous branching process $\{Z_n\}_{n \geq 0}$ with
\[
Z_0 = 1, Z_{n+l} = \sum_{i=1}^{Z_n}X_{n,i}, n \geq 0,
\]
where $X_{1,1}$ and $X_{n,i}, n >1$ are distributed as $Y_R$ and $Y,$ respectively. So, we have that 
\begin{displaymath}
\mathbb{P}(V_d)  = \sum_{r=0}^{d+1} \mathbb{P}(V_d| Y_R = r) \mathbb{P}
(Y_R = r).
\end{displaymath}
Given that $Y_R = r$ one have $r$ independent $\lowP$ processes living on $r$ independent rooted trees. So, we have that $\mathbb{P}(V_d^C | Y_R = r) = \rho^r, r =0,1,2,
\cdots, d+1$, where $\rho$ is the smallest non-negative solution of ${E}(s^Y)=s.$

By using the Total Probability Theorem,

\begin{align}
\mathbb{P}(Y_R=y) &=\sum_{n=y}^{\infty} \left[ \mathbb{P}(N=n) \frac
{\dbinom{d+1}{y}\dbinom{n-1}{y-1}}{\dbinom{n+d}{d}}\right] \textrm {
for } y =0,1,2, \cdots, d+1.\nonumber\\
\mathbb{P}(Y=y) &= \sum_{n=y}^{\infty} \left[ \mathbb{P}(N=n) \frac
{\dbinom{d}{y}\dbinom{n}{y}}{\dbinom{n+d}{d}}\right] \textrm { for }
y =0,1,2, \cdots, d.\label{distYL}
\end{align}

The probability generating function of $Y$ is 
\begin{equation}\label{fgpYL}
G_L(s)=\mathbb{E}(s^Y)  = \sum_{y=0}^{d}s^y \left[ \sum_{n=y}^{\infty} \left[
\mathbb{P}(N=n) \frac{\dbinom{d}{y}\dbinom{n}{y}}{\dbinom{n+d}{d}}
\right] \right].
\end{equation}

Thus,

\begin{displaymath}
\mathbb{P}(V_d) = \sum_{r=1}^{d+1}\left [(1 - \rho^r)\binom{d+1}
{r}\sum_{n=r}^{\infty}\frac{\binom{n-1}{r-1}}{\binom{n+d}{d}}\mathbb
{P}(N=n) \right]
\end{displaymath}
As for the second part of the proposition, note that
\begin{displaymath}
\mathbb{E}(I_d)  = \sum_{r=0}^{d+1} \mathbb{E}(I_d| Y_R = r) \mathbb{P}
(Y_R = r)
\end{displaymath}
Besides, from (\ref{eq:mediadescL}) we have
\begin{displaymath}
\mathbb{E}(I| Y_R = r) = r\mu + 1 \textrm{ onde } \mu = \left [ 1 - d
\mathbb{E} \left ( \frac{N}{N+d} \right)   \right ]^{-1},
\end{displaymath}
see Stirzaker~\cite[Exercise 2b, p. 280]{Stirzaker}.

\begin{prop}
\label{P: LlimPS}
Consider the process $\low$. Let $V_d$ be the survival event and $I_d$ the number of colonies created during the process. Then
\begin{equation}
\label{LlimPVD}
\lim_{d \to \infty} \mathbb{P}(V_d) = 1 - \nu
\end{equation}
where  $\nu$ is the smallest non-negative solution of $ \mathbb{E}(s^N) = s$. Besides that, if 
$\mathbb{E}(N) < 1$ (the subcritical case) then
\begin{equation}
\label{limEIDL}
\lim_{d \to \infty} \mathbb{E}(I_d) = \frac{1}{1 - \mathbb{E}(N)}.
\end{equation}
\end{prop}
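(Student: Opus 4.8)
The plan is to mirror the argument used for Proposition~\ref{P: UlimPS}, replacing the Self Avoiding offspring laws with the Move Forward or Die laws recorded in Proposition~\ref{P: CCSRCR2V}. The two facts I need are the distributional convergences $Y \overset{\mathcal D}{\to} N$ and $Y_R \overset{\mathcal D}{\to} N$ as $d \to \infty$, where $Y$ and $Y_R$ have the laws in~(\ref{distYL}) and the display preceding it. With these in hand, (\ref{LlimPVD}) follows at once from \cite[Proposition 4.2]{MRV2018}, exactly as before.

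First I would verify the convergence of the offspring laws. Fixing $n$ and using $\binom{d}{y} \sim d^y/y!$ together with $\binom{n+d}{d} \sim d^n/n!$ as $d \to \infty$, the conditional probability
\[
\mathbb{P}(Y = y \mid N = n) = \frac{\binom{d}{y}\binom{n}{y}}{\binom{n+d}{d}}
\]
tends to $\mathbf{1}\{y = n\}$, since the resulting factor $d^{y-n}$ vanishes unless $y = n$. The same asymptotics applied to $\binom{d+1}{y}\binom{n-1}{y-1}/\binom{n+d}{d}$ give $\mathbb{P}(Y_R = y \mid N = n) \to \mathbf{1}\{y = n\}$. Summing against $\mathbb{P}(N = n)$ then yields $Y \overset{\mathcal D}{\to} N$ and $Y_R \overset{\mathcal D}{\to} N$; intuitively, as the number of target vertices grows the collisions among the $N$ survivors vanish, so each survivor founds its own colony.

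For~(\ref{limEIDL}) I would start from the decomposition in Proposition~\ref{P: CCSRCR2V}, namely $\mathbb{E}(I_d) = \sum_{r=0}^{d+1}\mathbb{E}(I_d \mid Y_R = r)\mathbb{P}(Y_R = r)$ with $\mathbb{E}(I_d \mid Y_R = r) = r\mu + 1$ and $\mu = [1 - \mathbb{E}(Y)]^{-1}$ by~(\ref{eq:mediadescL}). Since $d\,N/(N+d)$ increases to $N$ and is dominated by $N$, the hypothesis $\mathbb{E}(N) < 1$ gives $\mathbb{E}(Y) \to \mathbb{E}(N)$, so $\mathbb{E}(I_d \mid Y_R = r) \to r/(1 - \mathbb{E}(N)) + 1$ for each fixed $r$. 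Passing the limit inside the sum and using $Y_R \overset{\mathcal D}{\to} N$ gives
\[
\lim_{d \to \infty}\mathbb{E}(I_d) = \sum_{r=0}^{\infty}\left(\frac{r}{1 - \mathbb{E}(N)} + 1\right)\mathbb{P}(N = r) = \frac{\mathbb{E}(N)}{1 - \mathbb{E}(N)} + 1 = \frac{1}{1 - \mathbb{E}(N)}.
\]

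The hard part will be justifying this interchange of limit and summation: both the factor $\mu = [1 - \mathbb{E}(Y)]^{-1}$ and the law of $Y_R$ depend on $d$, so a plain Dominated Convergence argument does not apply directly. I would invoke the generalized version in \cite[Theorem 9.1 p. 26]{Thorisson}, supplying a uniform integrable bound, for instance controlling $\sum_r r\,\mathbb{P}(Y_R = r) = \mathbb{E}(Y_R)$ uniformly in $d$ (it converges to $\mathbb{E}(N)$), exactly as in the proof of Proposition~\ref{P: UlimPS}. The remainder is a direct transcription of the Self Avoiding argument.
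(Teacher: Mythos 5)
Your proposal is correct and follows essentially the same route as the paper's proof: equation~(\ref{LlimPVD}) via \cite[Proposition 4.2]{MRV2018} once $Y \overset{\mathcal D}{\to} N$ and $Y_R \overset{\mathcal D}{\to} N$, and equation~(\ref{limEIDL}) via the decomposition over $Y_R$ from Proposition~\ref{P: CCSRCR2V} with the limit interchange justified by the extended dominated convergence theorem of \cite[Theorem 9.1 p. 26]{Thorisson}. You in fact supply details the paper leaves implicit (the binomial asymptotics behind the distributional convergences, and the observation that plain dominated convergence does not apply because both the summand and the law of $Y_R$ depend on $d$ --- one can even bypass Thorisson entirely by writing $\mathbb{E}(I_d) = \mu\,\mathbb{E}(Y_R) + 1$ and passing to the limit in each factor), so no gaps remain.
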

\begin{proof}[Proof of Proposition~\ref{P: LlimPS}]
In order to prove~(\ref{LlimPVD}) one has to apply \cite[Proposition 4.2]{MRV2018}, observing that $Y \overset{\mathcal D}{\to} N$ and $Y_{R} \overset{\mathcal D}{\to} N$, when $d\to\infty.$
Moreover to prove~(\ref{limEIDL}) observe that
\[
\lim_{d \to \infty} \mathbb{E}(I_d) = \lim_{d \to \infty} \sum_{r=0}^{d+1} \mathbb{E}(I_d|Y_{R}=r)\bbP(Y_{R}=r).\]
As  $Y \overset{\mathcal D}{\to} N$ and $Y_{R} \overset{\mathcal D}{\to} N $ when $d\to\infty$ then
\[ \lim_{d \to \infty}  \mathbb{E}(I_d|Y_{R}=r) = \lim_{d \to \infty} r\frac{1}{1-\mathbb{E}(Y)} +1 = \frac{r}{1-\mathbb{E}(N)}+1 \]
and the result follows from the Dominated Convergence Theorem~\cite[Theorem 9.1 p. 26]{Thorisson}.
\end{proof}

\begin{prop} \label{P: LIMTSRCR2}
Let $M_d$ be the reach of the process $\lowP$, that is, the distance from the origin to the farthest vertex where a colony is formed. 
Assuming
\begin{displaymath}
\mathbb{E} \left [ \frac{N}{N+d} \right]  < \frac{1}{d}
\end{displaymath}
Then
\[
\frac{\left [1+D(1- \mu_l \right](1- B_l^{n+1})}{1+D(1-\mu_l - \mu_l^{n+1})} \leq \mathbf{P}(M_d \leq n) \leq \frac{\left[1+ \frac{\mu_l(1- \mu_l)}{B_l} \right](1- \mu_l^{n+1})}{1+ \frac{\mu_L(1- \mu_l)}{B_l} - \mu_l^{n+1}}
\]

where
\begin{align*}
\mu_l &=  d \mathbb{E} \left [\frac{N}{N+d}\right] \\
D &= \max \left \{ 2; \frac{\mu_l}{\mu_l - \mathbb{P}(N \neq 0)} \right\} \\
B_l& =d(d-1)\left[\mathbb{E} \left(\frac{N(N-1)}{(N+d-1)(N+d)} \right) \right]
\end{align*}
Moreover,
\begin{displaymath}
M_d \overset{\mathcal D}{\to} M,
\end{displaymath}
where $\mathbb{P}( M \leq m) = g_{m+1}(0)$, being $g(s) = \mathbb{E}(s^N) $ and $g_{m+1}(s) = \overset{ m+1 \textrm { times }} {g(g(\cdots g(s)) \cdots )}$.
\end{prop}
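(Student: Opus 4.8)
The plan is to mirror, term for term, the argument already carried out for the self-avoiding analogue in Proposition~\ref{P: LIMTSRSR2}, replacing each quantity by its move-forward-or-die counterpart. Proposition~\ref{P: CCSRCR2} has already identified $\lowP$ as a \emph{homogeneous} branching process in which every colonized vertex spawns
\[
Y=\sum_{i=1}^{d}\mathcal{I}_i
\]
new colonies on its $d$ forward neighbors, where $\mathcal{I}_i$ is the indicator that forward vertex $i$ receives at least one survivor. From equation~(\ref{eq:mediadescL}) the first moment is already in hand, namely $\mathbb{E}(Y)=d\,\mathbb{E}\!\left(\frac{N}{N+d}\right)=\mu_l$, and the reach $M_d$ is precisely the height of the associated Galton--Watson tree. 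The subcritical hypothesis $\mathbb{E}[N/(N+d)]<1/d$ is exactly $\mu_l<1$, so this tree is a.s. finite and $\mathbb{P}(M_d\leq n)$ is the probability that its height does not exceed $n$.

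First I would compute the factorial second moment. Since the $\mathcal{I}_i$ are $\{0,1\}$-valued, $Y(Y-1)=\sum_{i\neq j}\mathcal{I}_i\mathcal{I}_j$, and by symmetry $\mathbb{E}(Y(Y-1))=d(d-1)\,\mathbb{E}(\mathcal{I}_1\mathcal{I}_2)$. The one genuinely new ingredient is the joint conditional law $\mathbb{P}(\mathcal{I}_1=1,\mathcal{I}_2=1\mid N=n)$. Conditioning on $n$ survivors dispersed uniformly over the $d+1$ neighboring targets (the $d$ forward vertices together with the backward/die direction), I would count occupancy vectors by inclusion--exclusion: the total number is $\binom{n+d}{d}$, subtracting the configurations that leave vertex $1$ (resp. vertex $2$) empty and adding back those leaving both empty yields
\[
\mathbb{P}(\mathcal{I}_1=1,\mathcal{I}_2=1\mid N=n)=1-\frac{2d}{n+d}+\frac{d(d-1)}{(n+d)(n+d-1)}=\frac{n(n-1)}{(n+d)(n+d-1)}.
\]
Summing against the law of $N$ then gives $\mathbb{E}(Y(Y-1))=d(d-1)\,\mathbb{E}\!\left(\frac{N(N-1)}{(N+d-1)(N+d)}\right)=B_l$.

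With $\mu_l=\mathbb{E}(Y)$ and $B_l=\mathbb{E}(Y(Y-1))$ identified, the two-sided bounds on $\mathbb{P}(M_d\leq n)$ follow at once from \cite[Theorem 1 p. 331]{AA}, with the constant $D$ exactly as recorded in the statement, just as in Proposition~\ref{P: LIMTSRSR2}. For the distributional limit, I would read off from the explicit offspring law~(\ref{distYL}) that, for each fixed $y$, the weight $\frac{\binom{d}{y}\binom{n}{y}}{\binom{n+d}{d}}$ tends to $1$ when $n=y$ and to $0$ when $n>y$ as $d\to\infty$ (since $\binom{d}{y}\sim d^{y}/y!$ while $\binom{n+d}{d}\sim d^{n}/n!$). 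Hence $\mathbb{P}(Y=y)\to\mathbb{P}(N=y)$, i.e. $Y\overset{\mathcal D}{\to}N$, and the convergence $M_d\overset{\mathcal D}{\to}M$ with $\mathbb{P}(M\leq m)=g_{m+1}(0)$ follows from \cite[Proposition 4.1]{MRV2018}.

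The hard part is the combinatorial bookkeeping in $\mathbb{P}(\mathcal{I}_1=1,\mathcal{I}_2=1\mid N=n)$: one must disperse over the correct number of sites, namely $d+1$ rather than $d$, because in the move-forward-or-die scheme the backward vertex is still a legitimate uniform-dispersion target (absorbing survivors) even though settling there is fatal, and the inclusion--exclusion must be executed without an off-by-one slip. This is exactly what shifts the denominators from $(N+d-1)$, seen in the self-avoiding case, to $(N+d)$ here. Everything else is a mechanical transcription of the proof of Proposition~\ref{P: LIMTSRSR2}.
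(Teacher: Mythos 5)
Your proposal is correct and follows essentially the same route as the paper's proof: the indicator decomposition $Y=\sum_{i=1}^{d}\mathcal{I}_i$, the identification $\mu_l=\mathbb{E}(Y)$ and $B_l=\mathbb{E}(Y(Y-1))$ via the joint law $\mathbb{P}(\mathcal{I}_1=1,\mathcal{I}_2=1\mid N=n)=\frac{n(n-1)}{(n+d-1)(n+d)}$, then Agresti's bounds and \cite[Proposition 4.1]{MRV2018} for the limit. The only differences are cosmetic and in your favor: you actually derive the joint conditional probability by inclusion--exclusion over the $\binom{n+d}{d}$ occupancy vectors (the paper merely asserts its value) and you justify $Y\overset{\mathcal D}{\to}N$ from the explicit offspring law, which the paper leaves implicit.
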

\textit{Proof of Proposition~\ref{P: LIMTSRCR2}}\\
Every vertex $x\in\mathbb{T}^d$ which is colonized
produces $Y$ new colonies (whose distribution depends only on $\mathcal{N}$ and $\mathcal{L}$) on the $d$ neighbor vertices which located are further from the origin than $x$ is. By numbering those $d$ vertices, we can represent the random variable $Y$ as 
\begin{displaymath}
 Y = \sum_{i=1}^{d}\mathcal{I}_i
\end{displaymath}
where $\mathcal{I}_i$ is the indicator variable of event the vertex $i$ receives at least one individual.

From (\ref{eq:mediadescL}) we obtained $\mathbb{E}(Y).$ Now

\begin{displaymath}
 Y^2 = \left (\sum_{i=1}^{d}\mathcal{I}_i \right )^2 = \sum_{i=1}^{d}\mathcal{I}_i^2 + 2\sum_{1 \leq i < j \leq d}\mathcal{I}_i\mathcal{I}_j
\end{displaymath}
\begin{displaymath}
 \mathbb{E} \left(Y^2 \right ) =  d \mathbb{E} \left (\mathcal{I}_1^2 \right ) + d(d-1) \mathbb{E}(\mathcal{I}_1\mathcal{I}_2)
\end{displaymath}
On the other hand
\begin{align*}
 \mathbb{E} \left (\mathcal{I}_1^2 \right ) & = \mathbb{P}(\mathcal{I}_1 = 1) = \sum_n \left [\mathbb{P}(\mathcal{I}_1=1|N=n)\mathbb{P}(N = n) \right] = \sum_{n} \left (\frac{n}{n+d} \right ) \mathbb{P}(N = n) \\ & = \mathbb{E} \left (\frac{N}{N+d} \right )
\end{align*}
and
\begin{align*}
 \mathbb{E} \left (\mathcal{I}_1 \mathcal{I}_2 \right ) & = \mathbb{P}(\mathcal{I}_1 = 1; \mathcal{I}_2 = 1) = \sum_n \left [\mathbb{P}(\mathcal{I}_1=1; \mathcal{I}_2 = 1|N=n)\mathbb{P}(N = n) \right]
\end{align*}
where
\begin{align*}
\mathbb{P}(\mathcal{I}_1=1; \mathcal{I}_2 = 1|N=n) = \frac{n(n-1)}{(n+d-1)(n+d)}
\end{align*}
 
Then
\begin{displaymath}
 \mathbb{E} \left(Y^2 \right ) = d\mathbb{E} \left (\frac{N}{N+d} \right ) +d(d-1)\mathbb{E} \left (\frac{N(N-1)}{(N+d-1)(N+d)} \right )
\end{displaymath}
\begin{align*}
\mathbb{E} \left(Y(Y-1) \right ) = d(d-1)\mathbb{E} \left (\frac{N(N-1)}{(N+d-1)(N+d)} \right ).
\end{align*}
Then the result follows from  ~\cite[Theorem 1 p. 331]{AA} with  $\mu_l\,=\mathbb{E}(Y)$ and  $B_l\,=\mathbb{E} \left(Y(Y-1) \right )$.

The convergence $M_d \overset{\mathcal D}{\to} M$ follows from the fact that $Y\overset{\mathcal D}{\to} N$ when $ d \to \infty$ and from \cite[Proposition 4.1]{MRV2018}.

\begin{prop}\label{ExtTimeL}
   Let $\tau_u(d)$ be the extinction time of the process $\lowP$. If 
\begin{displaymath}
\mathbb{E} \left [ \frac{N}{N+d} \right]  < \frac{1}{d}
\end{displaymath}
then 
\[\mathbb{E}[\tau_u(d)]=\displaystyle\int_0^1\frac{1-y}{G_L(y)-y}dy.\] where $G_L(s)$ is given by (\ref{fgpYL}).
\end{prop}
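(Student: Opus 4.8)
The plan is to run exactly the argument used for Proposition~\ref{ExtTimeU}, swapping the \emph{Self Avoiding} offspring structure for the \emph{Move Forward or Die} one. First I would let $Z_t$ denote the number of colonies alive at time $t$ in $\lowP$ and argue that $Z_t$ is a continuous-time (Markov) branching process with $Z_0=1$. The crucial point, already used in the proof of Proposition~\ref{P: CCSRCR2}, is that on $\mathbb{T}^d_+$ every colonized vertex --- including the origin, which has degree $d$ --- has exactly $d$ neighbors lying further from the origin. Hence each colony, irrespective of its location, is replaced at the instant of its catastrophe by $Y$ new colonies, where $Y$ has the \emph{common} offspring law (\ref{distYL}) with probability generating function $G_L$ given by (\ref{fgpYL}). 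This homogeneity is precisely why the statement is phrased for $\lowP$ rather than $\low$: it makes $Z_t$ a genuine single-type Markov branching process.

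Next I would record the timing mechanism. Each colony carries an independent $\mathrm{Exp}(1)$ clock marking the time of its catastrophe, so the particles of $Z_t$ die at unit rate and, upon death, branch according to $G_L$. Consequently the infinitesimal generating function of this Markov branching process is $u(s)=G_L(s)-s$.

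Then I would verify subcriticality. By (\ref{eq:mediadescL}) one has $G_L'(1)=\mathbb{E}(Y)=d\,\mathbb{E}\!\left[\frac{N}{N+d}\right]$, so the hypothesis $\mathbb{E}\!\left[\frac{N}{N+d}\right]<\frac{1}{d}$ is exactly $G_L'(1)<1$. Thus the process is subcritical, its unique fixed point in $[0,1]$ is $s=1$, extinction is almost sure, and $\mathbb{P}[\tau_u(d)<\infty]=1$. The stated formula then follows from Narayan~\cite{PN1982}: writing $F(t,s)=\mathbb{E}(s^{Z_t})$ and $x(t)=F(t,0)=\mathbb{P}[\tau_u(d)\le t]$, the backward equation $\dot{x}=G_L(x)-x$ with $x(0)=0$ gives, after the substitution $y=x(t)$,
\[
\mathbb{E}[\tau_u(d)]=\int_0^\infty\bigl(1-x(t)\bigr)\,dt=\int_0^1\frac{1-y}{G_L(y)-y}\,dy.
\]

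The only genuine obstacle is checking the two hypotheses behind Narayan's theorem: that $\lowP$ really is a faithful unit-rate Markov branching process with offspring pgf $G_L$ (the homogeneity argument of the first paragraph settles this), and that the resulting integral is finite. The latter I expect to dispatch easily, since convexity of $G_L$ together with $G_L'(1)<1$ yields $G_L(y)-y\ge(1-y)\bigl(1-G_L'(1)\bigr)>0$ on $[0,1)$, so the integrand is bounded by $1/\bigl(1-G_L'(1)\bigr)$ and no singularity at $y=1$ arises.
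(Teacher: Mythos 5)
Your proposal is correct and follows exactly the paper's route: the paper proves this proposition by declaring it analogous to Proposition~\ref{ExtTimeU}, i.e., viewing the colonies of $\lowP$ as a homogeneous continuous-time branching process with unit-rate exponential lifetimes and offspring pgf $G_L$ from (\ref{fgpYL}), checking subcriticality via $G_L'(1)=d\,\mathbb{E}[N/(N+d)]<1$, and invoking Narayan~\cite{PN1982}. Your added details (the homogeneity of the origin on $\mathbb{T}^d_+$, the backward-equation substitution, and the convexity bound $G_L(y)-y\geq(1-y)(1-G_L'(1))$ guaranteeing finiteness of the integral) are sound elaborations of the same argument rather than a different approach.
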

\begin{proof}
    Analogous to the proof of Proposition \ref{ExtTimeU}.
\end{proof}

\subsection{Proofs of the Main Results}
\hfill
First we define a coupling between the processes $\proc$ and $\lowP$ in such a way that the latter is stochastically dominated by the former. Every colony in $\lowP$ is associated to a colony in $\proc$. As a consequence, if the process $\proc$ dies out, the same happens to $\lowP$.\\
At every catastrophe time at a vertex $x$ in the model $\proc$, a non-empty group of individuals that tries to colonize the neighbor vertex to $x$ which is closer to the origin than $x$ will create there a new colony provided that that vertex is empty. In the model $\lowP$ the same
non-empty group of individuals that tries to colonize the same vertex, immediately dies.\\
Next we define a coupling between the processes $\proc$ and $\upPd$ in such a way that the former is stochastically dominated by the latter. Every colony in $\proc$ can be associated to a colony in $\upPd$. Thus, if the process $\upPd$ dies out, the same happens to $\proc$.\\
At every catastrophe time at a vertex $x$ in the model $\proc$, we associate the neighbor vertex to $x$ which is closer to the origin than $x$ to the extra vertex on the model $\upPd$. In the model $\proc$, a non-empty group of individuals that tries to colonize the neighbor vertex to $x$ which is
closer to the origin than $x$ will create there a new colony provided that that vertex is empty. In the model $\upPd$ the same non-empty group of individuals that tries to colonize the extra vertex, founds a new colony there.\\

\textit{Proof of Theorem~\ref{C: MCC2H}}
The result follows from the fact that the process $\proc$ stochastically
dominates the process $\lowP$ and by its turn, is stochastically dominated by the process $\upPd$, together with Propositions \ref{P: CCSRSR2} and \ref{P: CCSRCR2}.\\

\textit{Proof of Corollary~\ref{C: MCC2HP}}
The result follows from Theorem~\ref{C: MCC2H} and Equation (\ref{distparticular}).\\

\textit{Proof of Theorem~\ref{T: MCCHPE2}}
The result follows from the fact that the process $\proc$ stochastically
dominates the process $\lowP$ and by its turn, is stochastically dominated by the process $\upPd$, together with Propositions \ref{P: CCSRSR2V} and \ref{P: CCSRCR2V}.\\

\textit{Proof of Theorem~\ref{T: MCCHPE1L}}
The result follows from the fact that the process $\proc$ stochastically
dominates the process $\lowP$ and by its turn, is stochastically dominated by the process $\upPd$, together with Propositions \ref{P: UlimPS} and \ref{P: LlimPS}.\\

\textit{Proof of Corollary~\ref{C: MCC1HPS}}
The result follows from Theorem~\ref{T: MCCHPE1L} and Equation (\ref{fgpparticular}).\\

\textit{Proof of Theorem~\ref{T: alcance}}
The result follows from the fact that the process $\proc$ stochastically
dominates the process $\lowP$ and by its turn, is stochastically dominated by the process $\upPd$, together with Propositions \ref{P: LIMTSRSR2} and \ref{P: LIMTSRCR2}.\\

\textit{Proof of Theorem~\ref{T1: LEITL}}
The result follows from the fact that the process $\proc$ stochastically
dominates the process $\lowP$ and by its turn, is stochastically dominated by the process $\upPd$, together with Propositions \ref{P: LIMTSRSR2} and \ref{P: LIMTSRCR2}.\\

\textit{Proof of Theorem~\ref{T: MCC2HN}}
The result follows from the fact that the process $\proc$ stochastically
dominates the process $\lowP$ and by its turn, is stochastically dominated by the process $\upPd$, together with Propositions \ref{P: CCSRSR2V} and \ref{P: CCSRCR2V}.\\

\textit{Proof of Theorem~\ref{T: exttime}} 
The result follows from the fact that the process $\proc$ stochastically
dominates the process $\lowP$ and by its turn, is stochastically dominated by the process $\upPd$, together with Propositions \ref{ExtTimeU} and \ref{ExtTimeL}.
\\

\section{Comparison with model of independent dispersion}
\label{S: compara}
In this section, we compare the Self Avoiding Models with independent dispersion and uniform dispersion. The model with uniform dispersion is $\up$ considered in Section \ref{S:Uunif} and  the model with independent dispersion is considered in \cite[Section 4.1]{MRV2018} and we denote it here by $\upi$. The following results indicate that independent dispersion is a better strategy than uniform dispersion in order to extend population survival. 

\begin{teo}\label{compara} Consider the processes $\upP$ and $\upPi$ with survives probability $P(V_u)$ and $P(V_i)$, respectively.  If $P(V_i)=0$ then $P(V_u)=0.$
\end{teo}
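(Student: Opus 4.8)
The plan is to reduce the statement to a comparison of the mean offspring numbers of the two underlying homogeneous branching processes, and then to establish that comparison by a pointwise inequality that ultimately collapses to Bernoulli's inequality.

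First I would recall that, exactly as in the proof of Proposition~\ref{P: CCSRSR2}, the self-avoiding process $\upP$ is a homogeneous branching process in which each colony produces $Y_u$ offspring colonies with mean $\mathbb{E}(Y_u)=d\,\mathbb{E}\!\left(\frac{N}{N+d-1}\right)$, and it has positive survival probability if and only if $\mathbb{E}(Y_u)>1$. The independent-dispersion self-avoiding process $\upPi$ is likewise a homogeneous branching process: given $N=n$ survivors, each chooses one of the $d$ forward vertices uniformly and independently, so the probability that a fixed forward vertex is hit is $1-\left(\frac{d-1}{d}\right)^{n}$, whence its offspring variable $Y_i$ satisfies $\mathbb{E}(Y_i)=d\,\mathbb{E}\!\left[1-\left(\frac{d-1}{d}\right)^{N}\right]$ and, by the same branching argument used in \cite{MRV2018}, $P(V_i)>0$ if and only if $\mathbb{E}(Y_i)>1$. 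Consequently $P(V_i)=0$ forces $\mathbb{E}(Y_i)\le 1$, and it suffices to prove $\mathbb{E}(Y_u)\le\mathbb{E}(Y_i)$: once this is known, $\mathbb{E}(Y_u)\le 1$ and hence $P(V_u)=0$ by Proposition~\ref{P: CCSRSR2}.

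The core of the argument is therefore the pointwise inequality
\[
\frac{n}{n+d-1}\ \le\ 1-\left(\frac{d-1}{d}\right)^{n},\qquad n\ge 1,
\]
with both expressions vanishing at $n=0$, so that summing against the law of $N$ yields $\mathbb{E}(Y_u)\le\mathbb{E}(Y_i)$ after multiplying by $d$. Rearranging, the displayed bound is equivalent to $\left(\frac{d-1}{d}\right)^{n}\le \frac{d-1}{n+d-1}$. For $d=1$ both sides equal $1$ for every $n\ge 1$, so equality holds. For $d\ge 2$ I would set $m=d-1\ge 1$ and cross-multiply to reduce the claim to $(1+1/m)^{n}\ge 1+n/m$, which is precisely Bernoulli's inequality with $x=1/m>0$. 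Taking expectations over $N$ then gives $\mathbb{E}\!\left(\frac{N}{N+d-1}\right)\le\mathbb{E}\!\left[1-\left(\frac{d-1}{d}\right)^{N}\right]$, i.e. $\mathbb{E}(Y_u)\le\mathbb{E}(Y_i)$, and the theorem follows.

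I expect the only genuinely delicate point to be bookkeeping rather than analysis: one must confirm that $Y_u$ and $Y_i$ are the offspring variables of genuinely homogeneous branching processes, so that the criterion \emph{survives iff mean exceeds one} applies verbatim in both cases, and that the per-vertex hitting probability under independent dispersion is exactly $1-(1-1/d)^{n}$, matching the mean offspring formula quoted from \cite{MRV2018}. Once these identifications are fixed, all probabilistic content reduces to the deterministic inequality above, whose proof via Bernoulli is routine.
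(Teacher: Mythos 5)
Your proposal is correct and follows essentially the same route as the paper: both reduce the theorem to comparing the mean offspring numbers of the two homogeneous branching processes via the pointwise inequality $\frac{n}{n+d-1}\le 1-\left(\frac{d-1}{d}\right)^n$ and then invoke the standard survival criterion. The only difference is that you actually prove this inequality (via Bernoulli's inequality with $m=d-1$), whereas the paper simply asserts it, so your write-up is if anything slightly more complete.
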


\begin{proof}
Observe that for a fixed $\mathbb{T}_+^d$, $\mathcal{N}$ and $\mathcal{L}$, both processes, $\upP$ and $\upPi$, behave as a homogeneous branching processes. Every vertex $x\in\mathbb{T}_+^d$ which is colonized
produces a random number of new colonies (whose distribution depends only on $\mathcal{N}$, $\mathcal{L}$ and of  dispersion scheme) on the $d$ neighbor vertices which located are further from the origin than $x$ is. We denote by $Y_u$ those number in the process $\upP$ and $Y_i$ in the process $\upPi$.  By numbering those $d$ vertices, we can represent the random variables $Y_u$ and $Y_i$ as 
\begin{displaymath}
 Y_u = \sum_{i=1}^{d}\mathcal{I}_i,
 \hspace{0.5cm}
 \text{ and }
 \hspace{0.5cm}
 Y_i = \sum_{i=1}^{d}\mathcal{U}_i,
\end{displaymath}
where $\mathcal{I}_i$ and $\mathcal{U}_i$ are the indicator variables of event the vertex $i$ receives at least one individual in the processes $\upP$ and $\upPi$, respectively.

Observe that
\begin{align*}
\mathbb{E}(Y_u | N = n) &= \sum_{i=1}^{d}\mathbb{E}(\mathcal{U}_i | N = n)  = \sum_{i=1}^{d}\mathbb{P}(\mathcal{U}_i =1 | N = n)  = d\mathbb{P}(\mathcal{U}_1 =1 | N = n)  \\ & = \frac{dn}{n+d-1}.
\end{align*}

\begin{align*}
\mathbb{E}(Y_i | N = n) &= \sum_{i=1}^{d}\mathbb{E}(\mathcal{I}_i | N = n)  = \sum_{i=1}^{d}\mathbb{P}(\mathcal{I}_i =1 | N = n)  = d\mathbb{P}(\mathcal{I}_1 =1 | N = n)  \\ & = d \left ( 1 - \left (\frac{d-1}{d} \right )^n \right ).
\end{align*}

Note that for all $n\geq0$ and $d\geq1 $ integers, 
\[
1- \left (\frac{d-1}{d} \right )^n \geq \frac{n}{n+d-1}.
\]

Thus,
\begin{equation}\label{inequality}
    \mathbb{E}(Y_i ) = \mathbb{E} [\mathbb{E}(Y_i | N )] \geq 
 \mathbb{E} [\mathbb{E}(Y_u | N )] = \mathbb{E}(Y_u ).
\end{equation}

The result is obtained through the comparison of the mean numbers of offspring in the branching processes.
\end{proof}

\begin{teo}\label{th:coloniascriadas}  Let $I_u(d)$ and $I_i(d)$  be the numbers of colonies created during the processes $\up$ and $\upi$, respectively. 
\begin{enumerate}
    \item Suppose that 
\[
\mathbb{E} \left [ \frac{N}{N+d-1} \right]  < \frac{1}{d}.
\]
Then
\[
\mathbb{E}(I_u(d)) =  1 + \frac{(d+1)\mathbb{E} \left ( \frac{N}{N+d} \right )}{1 - d\mathbb{E} \left (\frac{N}{N+d-1} \right )}.
\]
\item Let $G(s)$ be the probability generating function of $N$. Suppose that 
\[
G \left ( \frac{d-1}{d} \right ) < \frac{d-1}{d}
\]
then
\[
\mathbb{E}(I_i(d)) = 1 + \frac{(d+1) \left ( 1- G \left ( \frac{d}{d+1}\right ) \right )}{1 - d \left ( 1- G \left ( \frac{d-1}{d}\right )  \right )} .
\]

\item For every $d\geq2,$ it holds that $\mathbb{E}(I_u(d))\leq \mathbb{E}(I_i(d))$. Besides, if $\mathbb{E}(N)<1,$
\[
\lim_{d \to \infty} \mathbb{E}(I_i(d)) = \lim_{d \to \infty} \mathbb{E}(I_u(d))  = \frac{1}{1- \mathbb{E}(N)}.
\]

\end{enumerate}
\end{teo}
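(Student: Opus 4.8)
The plan is to handle the three parts separately, reducing each to a branching-process computation already available in the paper or in \cite{MRV2018}. Part (1) requires no new work: its formula and hypothesis are exactly the second assertion of Proposition~\ref{P: CCSRSR2V} applied to the process $\up$, so I would simply invoke that proposition.

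For part (2) I would mirror the proof of Proposition~\ref{P: CCSRSR2V}, replacing uniform by independent dispersion. As there, $\upi$ is a non-homogeneous branching process in which the colony at the origin produces $Y_R^i$ new colonies on its $d+1$ forward neighbors while every other colony produces $Y_i$ new colonies on its $d$ forward neighbors. Under independent dispersion each of the $N$ survivors chooses a target uniformly and independently, so conditioning on $N=n$ a fixed interior target is missed with probability $((d-1)/d)^n$ and a fixed root target with probability $(d/(d+1))^n$; summing the hit-indicators and averaging over $N$ gives $\mathbb{E}(Y_i)=d(1-G((d-1)/d))$ and $\mathbb{E}(Y_R^i)=(d+1)(1-G(d/(d+1)))$. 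The expected total progeny of a subcritical branching process whose root carries a distinct offspring law equals $1+\mathbb{E}(Y_R^i)/(1-\mathbb{E}(Y_i))$ by Stirzaker~\cite[Exercise 2b, p. 280]{Stirzaker}, which is the stated formula; the subcriticality hypothesis is precisely what makes $1-\mathbb{E}(Y_i)>0$. This is the independent-dispersion counterpart of part (1), as worked out in \cite{MRV2018}.

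For part (3) I would build on the pointwise inequality already recorded inside the proof of Theorem~\ref{compara}, namely $1-((d-1)/d)^n\ge n/(n+d-1)$ for all integers $n\ge0$ and $d\ge1$, together with its root analogue $1-(d/(d+1))^n\ge n/(n+d)$ (the same inequality with $d+1$ targets). Taking expectations over $N$ yields $\mathbb{E}(Y_i)\ge\mathbb{E}(Y_u)$ and $\mathbb{E}(Y_R^i)\ge\mathbb{E}(Y_R^u)$, so independent dispersion has the larger interior mean and the larger root mean. Writing each expected progeny as $1+m_R/(1-m)$ and noting that this expression is nondecreasing in $m_R\ge0$ and nondecreasing in $m\in[0,1)$, I would conclude $\mathbb{E}(I_u(d))\le\mathbb{E}(I_i(d))$ whenever the independent process is subcritical; since $\mathbb{E}(Y_u)\le\mathbb{E}(Y_i)$ this also forces the uniform process to be subcritical, so both formulas apply. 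When instead $\mathbb{E}(Y_i)\ge1$ the independent process has infinite expected progeny and the inequality is trivial. For the common limit I would use that $Y_u,Y_R^u\overset{\mathcal D}{\to}N$ and $Y_i,Y_R^i\overset{\mathcal D}{\to}N$ as $d\to\infty$, since collisions among the independent choices (and among the uniform groupings) vanish; the dominated-convergence argument of Propositions~\ref{P: UlimPS} and \ref{P: LlimPS} then sends both expectations to $1/(1-\mathbb{E}(N))$.

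The step I expect to be the main obstacle is the monotonicity comparison in part (3): one cannot simply compare the two closed-form fractions, because each is valid only under its own subcriticality hypothesis, so the argument must combine the numerator bound and the denominator bound through the monotonicity of $m_R/(1-m)$ and treat separately the regime in which only the uniform process is subcritical.
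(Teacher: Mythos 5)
Your proposal is correct and follows essentially the same route as the paper: part (1) invokes Proposition~\ref{P: CCSRSR2V} exactly as the paper does, part (2) reconstructs the content of the cited \cite[Proposition 4.4]{MRV2018} via the same Stirzaker total-progeny formula $1+\mathbb{E}(Y_R)/(1-\mathbb{E}(Y))$ used throughout Section~\ref{S: Proofs}, and part (3) rests on the mean comparison (\ref{inequality}) together with the limiting arguments of Proposition~\ref{P: UlimPS} and its independent-dispersion counterpart \cite[Proposition 4.5]{MRV2018}, which is precisely the paper's citation chain. If anything your part (3) is more complete than the paper's one-line proof, since you make explicit the root-offspring comparison $1-\left(\frac{d}{d+1}\right)^n\ge \frac{n}{n+d}$ (needed in addition to (\ref{inequality}), because the root has $d+1$ forward neighbors) and the case analysis when subcriticality fails, and you tacitly work with the corrected direction of the hypothesis in item (2): as printed, $G\left(\frac{d-1}{d}\right)<\frac{d-1}{d}$ is the supercritical regime, whereas the subcriticality that makes the denominator $1-d\left(1-G\left(\frac{d-1}{d}\right)\right)$ positive requires the reverse inequality, exactly as your remark about $1-\mathbb{E}(Y_i)>0$ indicates.
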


\begin{proof}
Items $(1)$ and $(2)$ are Propositions~\ref{P: CCSRSR2V} and \cite[Proposition 4.4]{MRV2018}, respectively. Item $(3)$ follows from (\ref{inequality}), Proposition~\ref{P: UlimPS} and \cite[Proposition 4.5]{MRV2018}. 

\end{proof}

\begin{cor}
 Consider $\procUp$ and $\procIp$. 
\begin{enumerate}
    \item Suppose that 
\[
\Phi \left( \frac{\lambda p }{\lambda p + 1}, 1,d+1\right ) > \frac{(\lambda p +1)[p(\lambda d + d + 1)-1]}{pd(d+1)(\lambda +1)}.
\]
Then 
\[
\mathbb{E}(I_u(d)) = 1 + \frac{(d+1)(\lambda +1) \left ( \lambda p + 1 - d\Phi \left ( \frac{\lambda p}{\lambda p +1}, 1, d \right ) \right )}{\lambda (\lambda p +1) \left ( 1 - \frac{d(\lambda +1)p}{(\lambda p+1)^2} \left [ \lambda p+1 -(d-1)\Phi \left ( \frac{\lambda p}{\lambda p +1}, 1, d \right ) \right ] \right )},
\]
\item Suppose that 
\[
p < \frac{d+1}{d +\lambda(d-1)}
\]
Then
\[
\mathbb{E}(I_i(d)) = 1 + \frac{(d+1)(\lambda +1)p(d + \lambda p)}{(d+\lambda p +1)(d(1- p(\lambda +1)) + \lambda p)}
\]
\item If $(\lambda+1)p<1,$
\[
\lim_{d \to \infty} \mathbb{E}(I_i(d)) = \lim_{d \to \infty} \mathbb{E}(I_u(d))  = \frac{1}{1- (\lambda+1)p}.
\]
\end{enumerate}
\end{cor}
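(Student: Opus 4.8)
The plan is to read off each item as the specialization of Theorem~\ref{th:coloniascriadas} to the Poisson--binomial model, inserting the explicit law (\ref{distparticular}) of $N$ and its probability generating function (\ref{fgpparticular}) and then simplifying.

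For item (1) I would begin from $\mathbb{E}(I_u(d)) = 1 + (d+1)\mathbb{E}(N/(N+d))/(1 - d\,\mathbb{E}(N/(N+d-1)))$ and evaluate the two expectations separately. Writing $n/(n+k) = 1 - k/(n+k)$ and using (\ref{distparticular}), each expectation equals $\mathbb{P}(N \geq 1)$ minus a constant times the series $\sum_{n \geq 1}(n+k)^{-1}(\lambda p/(\lambda p+1))^{n}$; after the index shift $j = n-1$ this series is precisely $(\lambda p/(\lambda p+1))\,\Phi(\lambda p/(\lambda p+1),1,k+1)$. Taking $k=d-1$ produces $d\,\mathbb{E}(N/(N+d-1)) = \frac{d(\lambda+1)p}{(\lambda p+1)^2}(\lambda p + 1 - (d-1)\Phi(\cdot,1,d))$, the expression in the denominator, while $k=d$ produces $\mathbb{E}(N/(N+d)) = \alpha/d$ with $\alpha$ as in Lemma~\ref{lemparticular}, written through $\Phi(\cdot,1,d+1)$. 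The last step is purely cosmetic: the Lerch recurrence $\Phi(z,1,d) = d^{-1} + z\,\Phi(z,1,d+1)$ turns $\lambda p + 1 - d\,\Phi(\cdot,1,d+1)$ into $\frac{\lambda p+1}{\lambda p}(\lambda p + 1 - d\,\Phi(\cdot,1,d))$, which is what converts the numerator into the stated form $(d+1)(\lambda+1)(\lambda p + 1 - d\,\Phi(\cdot,1,d))/(\lambda(\lambda p+1))$. The subcriticality hypothesis of Theorem~\ref{th:coloniascriadas}, which is exactly the positivity of the denominator $1 - d\,\mathbb{E}(N/(N+d-1))$, is re-expressed through the same evaluation as a bound on $\Phi$.

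Item (2) is more elementary, since the independent-dispersion formula of Theorem~\ref{th:coloniascriadas}(2) uses only values of the generating function, and (\ref{fgpparticular}) is rational. Because $1 - d/(d+1) = 1/(d+1)$ and $1 - (d-1)/d = 1/d$, I would simply evaluate $G(d/(d+1)) = (d+1-p)/(d+1+\lambda p)$ and $G((d-1)/d) = (d-p)/(d+\lambda p)$, whence $1 - G(d/(d+1)) = p(\lambda+1)/(d+1+\lambda p)$ and $1 - G((d-1)/d) = p(\lambda+1)/(d+\lambda p)$. Substituting these into the formula and clearing the common denominators yields the stated rational expression, and the hypothesis on $G((d-1)/d)$ becomes a bound on $p$.

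Item (3) then follows from Theorem~\ref{th:coloniascriadas}(3) once one records $\mathbb{E}(N) = G'(1) = (\lambda+1)p$, obtained by differentiating (\ref{fgpparticular}) at $s=1$; the condition $\mathbb{E}(N) < 1$ becomes $(\lambda+1)p < 1$ and the common limit $1/(1 - \mathbb{E}(N))$ becomes $1/(1 - (\lambda+1)p)$. I expect the only genuine bookkeeping difficulty to lie in item (1): tracking whether each series lands on $\Phi(\cdot,1,d)$ or $\Phi(\cdot,1,d+1)$ and applying the recurrence in the right place, whereas items (2) and (3) are routine substitutions.
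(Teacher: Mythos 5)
Your proposal is correct and follows exactly the paper's route: the paper proves this corollary in one line as the specialization of Theorem~\ref{th:coloniascriadas} via (\ref{distparticular}) and (\ref{fgpparticular}), and your computations supply precisely the omitted algebra --- the evaluation $\mathbb{E}\bigl[\tfrac{N}{N+k}\bigr]=\tfrac{p(\lambda+1)}{(\lambda p+1)^2}\bigl[\lambda p+1-k\,\Phi\bigl(\tfrac{\lambda p}{\lambda p+1},1,k+1\bigr)\bigr]$ at $k=d$ and $k=d-1$, the recurrence $\Phi(z,1,d)=\tfrac{1}{d}+z\,\Phi(z,1,d+1)$ that converts the numerator into the $\Phi(\cdot,1,d)$ form, the values $G\bigl(\tfrac{d}{d+1}\bigr)=\tfrac{d+1-p}{d+1+\lambda p}$ and $G\bigl(\tfrac{d-1}{d}\bigr)=\tfrac{d-p}{d+\lambda p}$, and $\mathbb{E}(N)=G'(1)=(\lambda+1)p$ all check out. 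One caveat on the hypothesis-matching you treat as automatic: the displayed $\Phi$-inequality in item (1) is the exact translation of $\mathbb{E}\bigl[\tfrac{N}{N+d}\bigr]<\tfrac{1}{d+1}$ (the condition of Corollary~\ref{C: MCC1HPSEQ}), not of the theorem's $\mathbb{E}\bigl[\tfrac{N}{N+d-1}\bigr]<\tfrac{1}{d}$, so you need the one-line observation that $\tfrac{n}{n+d-1}\le\tfrac{d+1}{d}\cdot\tfrac{n}{n+d}$ for $n\ge1$ makes the former imply the latter; and in item (2) the literal hypothesis $G\bigl(\tfrac{d-1}{d}\bigr)<\tfrac{d-1}{d}$ translates to $p>\tfrac{d}{d+\lambda(d-1)}$, whereas positivity of the denominator $d(1-p(\lambda+1))+\lambda p$ requires $p<\tfrac{d}{d+\lambda(d-1)}$ --- an inequality-direction inconsistency inherited from the paper's statement of Theorem~\ref{th:coloniascriadas}(2) (and its stated bound $p<\tfrac{d+1}{d+\lambda(d-1)}$), which your ``routine substitution'' would expose and should flag rather than gloss.
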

\begin{proof}
    Follows from Theorem~\ref{th:coloniascriadas} and equations (\ref{distparticular}) and (\ref{fgpparticular}).
\end{proof}

\begin{teo}\label{extintiontime}  Let $\tau_u(d)$ and $\tau_i(d)$ be the extinction times  of the processes $\procUp$ and $\procIp$.

\begin{enumerate}
    \item Suppose that 
\[
\Phi \left( \frac{\lambda p }{\lambda p + 1}, 1,d+1\right ) > \frac{(\lambda p +1)[p(\lambda d + d + 1)-1]}{pd(d+1)(\lambda +1)}.
\]
Then
\[
\mathbb{E}(\tau_u(d)) = \int_{0}^{1} \frac{1-s}{G_u(s) - s}ds,
\]
where
\[
G_u(s) = \frac{1}{(\lambda p + 1)}\left [ 1-p + \frac{(\lambda +1)}{\lambda}\sum_{y=1}^{d} \left[ \dbinom{d}
{y}s^y \sum_{n=y}^{\infty} \left[ \left (\frac{\lambda p}{\lambda p +1}\right)^n \frac{\dbinom{n-1}
{y-1}}{\dbinom{n+d-1}{d-1}} \right] \right]  \right].
\]

\item  Suppose that 
\[
p < \frac{d+1}{d +\lambda(d-1)}
\]
Then
\[
\mathbb{E}(\tau_i(d)) = \int_{0}^{1} \frac{1-s}{G_i(s) - s}ds,
\]
where
\[
G_i(s) =   \frac{1}{\lambda p+1} \left[ 1- p  + \frac{\lambda + 1}{\lambda} \sum_{y=1}^{d}\left [ s^y \binom{d}{y} \sum_{n=y}^{\infty} T(n,y) \left (\frac{\lambda p}{d(\lambda p +1)} \right )^n \right ] \right],
\]
with \begin{displaymath}
T(n,k) = \sum_{i=0}^{k} \left [ (-1)^i \binom{k}{i}(k-i)^n \right], n \geq k.
\end{displaymath}
\end{enumerate}
\end{teo}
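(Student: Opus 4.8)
The plan is to deduce both identities from the generic mean--extinction--time formula for a subcritical continuous--time Markov branching process, and then to perform an explicit computation of the relevant offspring probability generating function after inserting the law (\ref{distparticular}) of $N$. In both items the process is a homogeneous branching process in which a colony, upon suffering a catastrophe, spawns new colonies on its $d$ forward vertices; the only difference between the two items is the dispersion mechanism, hence the offspring law.

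For item (1) I would observe that $\procUp$ is the uniform self--avoiding process $\upP$ with $\mathcal N=\mathcal P(\lambda)$ and $\mathcal L=\mathrm{Bin}(p)$, so that Proposition~\ref{ExtTimeU} applies verbatim: under subcriticality, $\mathbb E(\tau_u(d))=\int_0^1\frac{1-s}{G_U(s)-s}\,ds$ with $G_U$ given by (\ref{fgpY}). It then remains to specialize $G_U$. Writing $z=\frac{\lambda p}{\lambda p+1}$ and inserting (\ref{distparticular}), the term $n=0$ contributes $\mathbb P(N=0)=\frac{1-p}{\lambda p+1}$, while for $n\ge1$ the common factor $\frac{\lambda+1}{\lambda(\lambda p+1)}$ pulls out of the double sum; collecting the factor $\frac{1}{\lambda p+1}$ then gives precisely the stated $G_u(s)$. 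The hypothesis is just the subcriticality condition $\mathbb E\!\left[\frac{N}{N+d-1}\right]<\frac1d$ of Proposition~\ref{ExtTimeU} rewritten through (\ref{distparticular}); using $\frac{n}{n+d-1}=1-\frac{d-1}{n+d-1}$ together with $\sum_{n\ge1}\frac{z^n}{n+d-1}=z\,\Phi(z,1,d)$, this is a direct manipulation into a Lerch inequality.

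For item (2) the same generic formula is the starting point, so $\mathbb E(\tau_i(d))=\int_0^1\frac{1-s}{G_i(s)-s}\,ds$ once the offspring generating function $G_i$ of $\upPi$ is identified; the underlying continuous--time branching extinction--time result is the independent--dispersion analogue used in~\cite{MRV2018} (ultimately Narayan~\cite{PN1982}). The combinatorial heart is the conditional law of the offspring $Y_i$: given $N=n$, each survivor independently chooses one of the $d$ forward vertices, so $Y_i$ counts the non--empty boxes when $n$ balls fall uniformly into $d$ boxes. By inclusion--exclusion the number of allocations surjecting onto a prescribed set of $y$ boxes is $T(n,y)=\sum_{i=0}^{y}(-1)^i\binom{y}{i}(y-i)^n$, whence $\mathbb P(Y_i=y\mid N=n)=\binom{d}{y}T(n,y)/d^{\,n}$. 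Conditioning on $N$, inserting (\ref{distparticular}), factoring $\frac{\lambda+1}{\lambda(\lambda p+1)}$, and interchanging the (absolutely convergent, since $z<1$) sums over $n$ and $y$ regroups the inner series as $\sum_{n\ge y}T(n,y)\bigl(\tfrac{\lambda p}{d(\lambda p+1)}\bigr)^n$ and yields the stated $G_i(s)$. The hypothesis on $p$ is the subcriticality condition $\mathbb E(Y_i)<1$; by (\ref{fgpparticular}) one computes $\mathbb E(Y_i)=d\bigl(1-G(\tfrac{d-1}{d})\bigr)=\frac{dp(\lambda+1)}{d+\lambda p}$, and requiring this to be $<1$ rearranges into the displayed threshold on $p$.

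The substitutions, the factoring of constants, and the Lerch rewriting are routine. The genuine content lies in (a) recognizing each hypothesis as the subcriticality condition of the corresponding branching process, which is exactly what guarantees $G(s)>s$ on $(0,1)$ and hence both the finiteness of the integral and an almost surely finite extinction time, and (b) the occupancy/surjection computation of the conditional law of $Y_i$ together with the justification of the interchange of summation. I expect step (b) to be the main, if standard, obstacle.
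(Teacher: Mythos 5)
Your overall route coincides with the paper's own (very terse) proof: item (1) is exactly Proposition~\ref{ExtTimeU} specialized to $\mathcal{N}=\mathcal{P}(\lambda)$, $\mathcal{L}=\text{Bin}(p)$ via (\ref{distparticular}), and item (2) is the same Narayan-type formula with the offspring pgf of the independent-dispersion self-avoiding process, which the paper simply imports from \cite[proof of Proposition 4.4]{MRV2018} and then specializes. Your extra detail is sound and even an improvement: your occupancy computation $\mathbb{P}(Y_i=y\mid N=n)=\binom{d}{y}T(n,y)/d^{\,n}$ is correct and restores the factor $d^{-n}$ that the paper's displayed intermediate formula for $G_i$ omits (the final stated $G_i$, with $\bigl(\tfrac{\lambda p}{d(\lambda p+1)}\bigr)^n$, has it).

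There is, however, a concrete false step in your treatment of the hypotheses. In item (2) you assert that $\mathbb{E}(Y_i)=\tfrac{dp(\lambda+1)}{d+\lambda p}<1$ ``rearranges into the displayed threshold''; it does not. It rearranges to $p<\tfrac{d}{d+\lambda(d-1)}$, whereas the displayed hypothesis $p<\tfrac{d+1}{d+\lambda(d-1)}$ is strictly weaker and is in fact the subcriticality condition $d\bigl(1-G(\tfrac{d}{d+1})\bigr)<1$ of the \emph{move-forward-or-die} independent model, not of $\procIp$. For $p$ in the gap between the two thresholds the process is supercritical, $\mathbb{P}[\tau_i(d)=\infty]>0$, and the integral representation breaks down; so as written your last step fails (what you have actually uncovered is an apparent typo in the theorem's hypothesis, consistent with the sign inconsistency already visible in item (2) of Theorem~\ref{th:coloniascriadas}, but your claimed equivalence is wrong either way). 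A smaller inexactness of the same kind occurs in item (1): your manipulation with $\sum_{n\ge1}\frac{z^n}{n+d-1}=z\Phi(z,1,d)$ produces a condition on $\Phi(z,1,d)$, while the displayed hypothesis involves $\Phi(z,1,d+1)$ and is the Lerch rewriting of $\mathbb{E}\bigl[\tfrac{N}{N+d}\bigr]<\tfrac{1}{d+1}$. Since $\tfrac{(d+1)n}{n+d}\ge\tfrac{dn}{n+d-1}$ for $n\ge1$, the displayed hypothesis strictly implies the subcriticality condition $\mathbb{E}\bigl[\tfrac{N}{N+d-1}\bigr]<\tfrac1d$ of Proposition~\ref{ExtTimeU}, so here your argument survives, but you should claim an implication, not an identity.
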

 
\begin{rem}
The quantity $T(n,k)$ gives the number of surjective functions $f:A \to B $, where $|A| = n$ and $|B| = k$, see  Tucker~\cite[p. 319]{Tucker}.
 \end{rem}

\begin{proof}
 Item $(1)$ is Proposition~\ref{ExtTimeU} with $\mathcal{N}=\mathcal{P}(\lambda)$ and $\mathcal{L}= \text{Bin}(p).$ Proof of item $(2)$ follows in an analogous way. On this last case, the probability generating function of the number of colonies created right after a catastrophe, $Y_d$, is given by 
 \[
G_i(s) =  \sum_{y=0}^{d}\left [ s^y \binom{d}{y} \sum_{n=y}^{\infty} T(n,y) \mathbb{P}[N=n] \right],
\]
see Machado et al~\cite[proof of Proposition 4.4]{MRV2018} for details. Finally, by using Equation~(\ref{distparticular}) follows the result. 
\end{proof}

\begin{exa} Consider $\procUp$ and $\procIp$ with 
 $\lambda =1$ and $p=\frac{1}{2}.$ By using Theorem~\ref{extintiontime} we compute the mean extinction times for both models with $2\leq d\leq 6$, see Table~\ref{tableET}. 
 \begin{table}[h]
 \centering
\caption{Mean extinction times for $\text{U}(\mathbb{T}^d_+; \mathcal{P}(1), \text{Bin}(\frac{1}{2}))$  and $\text{I}(\mathbb{T}^d_+; \mathcal{P}(1), \text{Bin}(\frac{1}{2})).$ }
\begin{tabular}{|c|c|c|c|c|c|}
  \hline

  $d$ & 2 & 3  & 4 &  5 & 6\\
  \hline
 $\mathbb{E}(\tau_u(d))$ & 3.494 & 3.862 & 4.159 & 4.408 & 4.623  \\
  \hline
  $\mathbb{E}(\tau_i(d))$  & 3.831 & 4.372 & 4.779 & 5.108 & 5.384 \\
  \hline
\end{tabular}\label{tableET}
\end{table}

\end{exa}

\end{document}